\newtheorem{thm}{Theorem}[section]
\newtheorem{prop}{Proposition}[section]
\newtheorem{lem}{Lemma}[section]
\newtheorem{cor}{Corollary}[section]
\newtheorem{rmq}{Remark}[section]
\newcommand{\R}{\mathbb{R}}
\numberwithin{equation}{section}
\newcommand{\N}{\mathbb{N}}
\newcommand{\Z}{\mathbb{Z}}
\newcommand{\Cc}{\mathbb{C}}
\newcommand{\sm}{\setminus}
\newcommand{\eps}{\varepsilon}
\newcounter{exercice}
\DeclareMathOperator{\Real}{Re}
\DeclareMathOperator{\supp}{supp}
\renewcommand{\b}[1]{\textcolor{blue}{#1}}
\begin{document}
\title[4th order NHL]{On a fourth order nonlinear Helmholtz equation}

\author[Bonheure, Casteras and Mandel]{Denis Bonheure \and Jean-Baptiste Casteras \and Rainer Mandel}

\address{Denis Bonheure, Jean-Baptiste Casteras
\newline \indent D\'epartement de Math\'ematiques, Universit\'e Libre de Bruxelles,
\newline \indent CP 214, Boulevard du triomphe, B-1050 Bruxelles, Belgium,
\newline \indent and INRIA- team MEPHYSTO.}
\email{Denis.Bonheure@ulb.ac.be}
\email{jeanbaptiste.casteras@gmail.com}
\address{Rainer Mandel
\newline \indent Karlsruhe Institute of Technology
\newline \indent  Institute for Analysis
\newline \indent  Englerstrasse 2, D-76131 Karlsruhe, Germany.}
\email{Rainer.Mandel@kit.edu }

\begin{abstract}
In this paper, we study the mixed dispersion fourth order nonlinear Helmholtz equation
$$
 \Delta ^2 u -\beta \Delta u + \alpha u= \Gamma|u|^{p-2} u \quad\text{in } \R^N,
$$
for positive, bounded and $\Z^N$-periodic functions $\Gamma$ in the following three cases:
\begin{equation*}
  (a)\;\; \alpha<0,\beta \in \R \qquad\text{or}\qquad
  (b)\;\; \alpha>0,\beta < -2\sqrt{\alpha} \qquad\text{or}\qquad
  (c)\;\; \alpha =0,\beta <0.
\end{equation*}
Using the dual method of Ev\'equoz and Weth, we find solutions to this equation and establish some of their
qualitative properties.
\end{abstract}

\maketitle
\section{Introduction}
In this paper, we study the existence and the qualitative properties of solutions to the following mixed
dispersion fourth order nonlinear Helmholtz type equation
\begin{equation}
\label{4nle}
\tag{4NHE}
\gamma \Delta^2 w - \Delta w  +\alpha w = \Gamma |w|^{p-2}w \quad \text{in } \R^N,
\end{equation}
where $\gamma>0$, $\alpha \in \R,p>2$ and $\Gamma$ is a positive, bounded and periodic function. When
$\gamma=0$, \eqref{4nle} yields standing wave solutions, i.e. solutions of the form $\psi
(t,x)=e^{i\alpha t}w(x)$, to the well-known Schr\" odinger equation
\begin{equation}
\label{2NLS}
\tag{2NLS}
i\partial_t \psi +\Delta \psi + |\psi|^{p-2}\psi =0,\  \psi (0,x)=\psi_0 (x),\quad (t,x)\in \R\times \R^N.
\end{equation}
It is well-known that when $(p-2)N< 4$, solutions to \eqref{2NLS} exist globally in time
and that they are stable whereas when $(p-2)N\geq 4$, they can become singular in finite time and they are unstable \cite{Caz,Sulem}. 
Observe that in the physically relevant case $N=2$ and $p=4$,
we are in the second situation. In order to regularize and stabilize solutions to \eqref{2NLS}, Karpman and
Shagalov \cite{MR1779828} introduced a small fourth-order dispersion, namely they considered
\begin{equation}
\label{4NLS}
\tag{4NLS}
i\partial_t \psi -\gamma \Delta^2 \psi +\Delta \psi +|\psi|^{p-2}\psi =0,\ \psi (0,x)=\psi_0 (x),\quad
(t,x)\in \R\times \R^N.
\end{equation}
Using a combination of stability analysis and numerical simulations, they showed  that standing wave solutions
for this equation, i.e. solutions of \eqref{4nle} with $\alpha>0$, are stable when $(p-2)N<8$
and unstable when $(p-2)N\geq 8$. Fibich and al. \cite{MR1898529} also  proved, using the Strichartz estimates
of Ben-Artzi and al. \cite{MR1745182}, global existence in time of the solutions to \eqref{4NLS} when $(p-2)N<8$ when the initial datum is in the energy space. In particular, thanks to the presence of the biharmonic term, we see that when $N=2$ and $p=4$, solutions to \eqref{4NLS} exist globally in time and standing solutions are stable. The addition of the fourth order term has also been motivated from a phenomenological point of view. In nonlinear optics, \eqref{2NLS} is usually derived from the nonlinear Helmholtz equation through the so-called paraxial approximation. The fact that its solutions may blow up in finite time suggests that some small terms neglected in the paraxial approximation play an important role to prevent this phenomenon. The addition of a small fourth-order dispersion term was proposed in \cite{MR1898529} as a nonparaxial
correction, which eventually gives rise to \eqref{4NLS}. Despite being less studied than the classical \eqref{2NLS}, an increasing attention has been given to
 \eqref{4NLS}. We refer to the works of Pausader \cite{MR2353631,MR2502523,MR2505703,MR2746203,MR3078112},
 Miao and al. \cite{miao}, Ruzhansky and al. \cite{ruz}, Segata \cite{segata1,segata2} concerning global
 well-posedness and scattering, to \cite{BCGJ2,BL} for finite-time blow-up and to
 \cite{BCDN,BCGJ1,natalipastor} for the stability of standing wave solutions.
We also mention that \eqref{4nle} also appears in the theory of water waves \cite{bretherton} and as a model to study travelling waves in suspension bridges \cite{lazer,mckenna} (see also \cite{Buffoni1995109,levandosky}). \\

Next, let us mention existence results for \eqref{4nle}. First, observe that using the scaling $u(x)=w(\gamma^{-1/4}x)$, we see that \eqref{4nle} is equivalent to
\begin{equation}
\label{4nls}
 \Delta^2 u - \beta\Delta u  +\alpha u = |u|^{p-2}u \quad\text{in } \R^N,
\end{equation}
where $\beta =\gamma^{-1/2}$. Bonheure and Nascimento \cite{BN} considered the following minimization problem
\begin{equation}
\label{mini}
m:=\inf_{u\in M} J_{\alpha,\beta}(u), 
\end{equation}
where
$$
  J_{\alpha,\beta}(u):=\int_{\R^N} (|\Delta u|^2 +\beta |\nabla u|^2 +\alpha u^2) \, dx
$$
and
$$
  M:=\{u\in H^2 (\R^N ):\ \int_{\R^N}|u|^{p} \, dx=1 \}. 
$$
Notice that if $u\in M$ achieves the infimum $m$, then $u$ is a solution to
$$\Delta^2 u -\beta \Delta u+\alpha u=m|u|^{p-2}u. 
$$
Thus, if $m>0$, then $w=m^{1/(p-2)}u$ solves \eqref{4nls}. They showed that the minimization problem
\eqref{mini} admits a solution provided that $\alpha>0$ and
$\beta>-2\sqrt{\alpha}$. The exponent is assumed to satisfy $p>2$ in the case $N=2,3,4$ and $2<p<2N/(N-4)$
if $N>4$. Observe that, thanks to these assumptions on $\alpha$ and $\beta$, the
functional $J_{\alpha,\beta}(u)$ is equivalent to the usual norm in $H^2 (\R^N)$. They also show that their solution has a sign, is radially symmetric if in addition $\beta\geq 2\sqrt{\alpha}$ and, in \cite{BCDN}, that it is exponentially decreasing (if $\beta > -2\sqrt{\alpha}$). 
The case
$\alpha=0$ and $\beta>0$ has been considered in \cite{BCGJ1} where the existence of solutions, belonging to
$X:=\{u\in D^{1,2}(\R^N)|\ \|\Delta u\|_{L^2 (\R^N)}<\infty \}$, has been obtained provided that 
$2N/(N-2)\leq p$  if $N=3,4$ and $2N/(N-2)\leq p< 2N/(N-4)$  if $N>4$. Moreover, this solution has a sign and belongs
to $L^2 (\R^N)$ if and only if $N\geq 5$ suggesting that it decays only polynomially. In fact, it was proved
in this setting that any radial solution $u$ to \eqref{4nls} satisfies $\lim_{|x|\rightarrow
\infty} u(x)|x|^{N-2}=C$ for some constant $C\in \R \backslash \{0\}$. The non radial case is open. 

\medskip 
  
In this paper, we are interested in other ranges for the parameters $\alpha$ and $\beta$, namely we consider the cases
\begin{equation}
  (a)\;\; \alpha<0,\beta \in \R \qquad\text{or}\qquad
  (b)\;\; \alpha>0,\beta < -2\sqrt{\alpha} \qquad\text{or}\qquad
  (c)\;\; \alpha =0,\beta <0.
\end{equation}

To our knowledge, existence results in these cases have not been previously treated in the literature. The main difficulty for these parameter values comes from the fact that $0$ is contained in the essential spectrum of the differential operator
$L:=\Delta^2-\beta\Delta+\alpha$.
Recently, a series of papers by Ev\'equoz and Weth \cite{Ev,MR3149060,EW,MR3625081} tackles this problem
for \eqref{4nle} in the case $\gamma=0,\beta=1$ and $\alpha <0$. We also refer to the previous work of Guti\' errez
\cite{Gu} and the very recent works \cite{Mandel2,Mandel1} of the third author and his collaborators for several extensions respectively to the case where $\alpha$ is replaced by a $\Z^N$-periodic potential and to the case of a system . In \cite{MMP} a sharp decay result for radial solutions was found for the second order Helmholtz equation. 

\smallbreak
We now describe in more details the Ev\'equoz and Weth strategy, see \cite{EW}, that we will adapt. In that paper, the authors
studied the following equation
\begin{equation}
\label{eqEW}
-\Delta u - k^2 u = \Gamma |u|^{p-2}u \quad\text{ in }\R^N,
\end{equation}
where $\Gamma \in L^\infty (\R^N)$, $0\neq \Gamma\geq 0$ and either $\Gamma$ is $\Z^N$-periodic or
$\lim_{|x|\rightarrow \infty}\Gamma (x)=0$. The main difficulty of this problem is the lack of a direct
variational approach. Indeed, one expects that the solutions to \eqref{eqEW} will not decay faster than
$O(|x|^{(1-N)/2})$ as $|x|\rightarrow \infty$. This last claim was indeed proved in~\cite{MMP} for all
nontrivial radial solutions of a class of nonlinear Helmholtz equations of the form \eqref{eqEW}. As a consequence, in this case, the usual energy functional formally associated with \eqref{eqEW} is not even well-defined on
nontrivial solutions.

To overcome this difficulty, Ev\'equoz and Weth proposed a dual variational approach, transforming
\eqref{eqEW} into 
\begin{equation} \label{eq:dual_equation_2ndorder}
  |v|^{p^\prime-2}v= \Gamma^{1/p} \mathcal R_{k^2}\big(\Gamma^{1/p}v \big),
\end{equation} 
where $v=\Gamma^{1/p^\prime}|u|^{p-2} u$ and $\mathcal R_{k^2}=(-\Delta - k^2)^{-1}$ is a resolvent-type
operator constructed via the so-called limiting absorption principle. We refer
to~\eqref{eq:definition_2ndorder_resolvent} below for a precise definition. Here and in the following, $p^\prime=p/(p-1)$.
Thanks to this dual formulation, which is variational in $L^{p^\prime}(\R^N)$, they obtained a ground-state
solution $v\in L^{p^\prime}(\R^N)$ of \eqref{eq:dual_equation_2ndorder} via the Moutain-Pass theorem whenever
the exponent satisfies $\frac{2(N+1)}{N-1}<p<\frac{2N}{N-2}$ and $\Gamma$ is positive, bounded and
$\Z^N$-periodic.
The associated function $u$ was shown to be a strong solution of~\eqref{eqEW} lying in $W^{2,q}(\R^N)\cap
C^{1,\alpha}(\R^N)$ for all $q\in [p,\infty)$ and $\alpha \in (0,1)$. 
Notice that existence results have been obtained also in the Sobolev-critical case $p=\frac{2N}{N-2}$ in
\cite{EvYe}. The lower bound for $p$ is related to the mapping properties of the
resolvent type operator $\mathcal R_{k^2}$,  which in turn is linked with the Stein-Tomas Theorem (see Theorem
\ref{STT}). We will comment on this in more detail later on.

\medskip

Let us return to the nonlinear fourth order Helmholtz equation~\eqref{4nle} that we will investigate by 
adapting the dual variational method of Ev\'equoz and Weth. The main task is to construct and analyze
a resolvent-type operator $\mathfrak R:= (\Delta^2 -\beta \Delta +\alpha)^{-1}$ with mapping properties
similar to and even better than their second order counterparts. First notice that we can
decompose the operator $L$ into two second order operators by writing
\begin{align} \label{eq:def_a1a2}
  \begin{aligned}
  L &= \Delta^2 -\beta \Delta +\alpha = (-\Delta  -a_1 ) (-\Delta -a_2), \quad\text{where } \\
  a_1 &:= \dfrac{-\beta+\sqrt{\beta^2 - 4\alpha}}{2},\qquad a_2:= \dfrac{-\beta-\sqrt{\beta^2 - 4\alpha}}{2}.
  \end{aligned}
\end{align}
We see that $\alpha<0$ implies $a_1>0>a_2$ and $\alpha=0,\beta<0$ implies $a_1>0=a_2$ so that $L$ becomes a
composition of a Schr\" odinger operator and a Helmholtz operator or the Laplacian, respectively. In the case
$\alpha>0$ and $\beta < -2\sqrt{\alpha}$ we find $a_1>a_2>0$ so that $L$ decomposes into two Helmholtz
operators. This leads us to study the nonlinear problem~\eqref{4nls} under the following assumptions:
\begin{itemize}
  \item[(A1)] $\alpha,\beta\in\R$ satisfy $\alpha<0$, $N\geq 2$ or $\alpha>0,\beta< -2\sqrt{\alpha}$, $N\geq
  2$   or $\alpha=0$, $\beta<0$, $N\geq 3$;
  \item[(A2)] $\Gamma\in L^\infty(\R^N)$ is $\Z^N$-periodic with $\inf_{\R^n} \Gamma>0$ and
  $\frac{2(N+1)}{N-1}<p<\frac{2N}{(N-4)_+}$.
\end{itemize}
Here and in the following, the symbol $\frac{2N}{(N-4)_+}$ stands for $\infty$ in the case $N\leq 4$
and for $\frac{2N}{N-4}$ if $N\geq 5$. Besides the mere existence of a nontrivial $L^p(\R^N)$-solution
of~\eqref{4nls}, we will determine further regularity properties as well as a far field pattern for such
solutions. This pattern can be expressed in terms of the function
$$
  U_f(x) := \frac{a_1^{\frac{N-3}{4}}}{\sqrt{\beta^2-4\alpha}} \sqrt{\frac{\pi}{2}} \frac{e^{i(\sqrt{a_1}
  |x|-\frac{N-3}{4}\pi)}}{|x|^{\frac{N-1}{2}}} \hat{f}\big(\sqrt{a_1} \frac{x}{|x|}\big),
  \qquad\text{if }\alpha<0 \text{ or } N>3,\ \alpha=0,\ \beta<0, 
$$
$$ U_f(x) := \frac{1}{|\beta|} \sqrt{\frac{\pi}{2}} \frac{e^{i\sqrt{a_1}
  |x|}}{|x|} \hat{f}\big(\sqrt{a_1} \frac{x}{|x|}\big)- \dfrac{1}{4\pi |\beta||x|}\int_{\R^N} f(x) \, dx ,  
  \qquad\text{if } N=3,\ \alpha=0,\ \beta<0, 
$$
or
\begin{align*}
U_f(x) 
   &:= \frac{a_1^{\frac{N-3}{4}}}{\sqrt{\beta^2-4\alpha}} \sqrt{\frac{\pi}{2}} \frac{e^{i(\sqrt{a_1}
  |x|-\frac{N-3}{4}\pi)}}{|x|^{\frac{N-1}{2}}} \hat{f}\big(\sqrt{a_1} \frac{x}{|x|}\big) \\
  &- \frac{a_2^{\frac{N-3}{4}}}{\sqrt{\beta^2-4\alpha}} \sqrt{\frac{\pi}{2}} \frac{e^{i(\sqrt{a_2}
  |x|-\frac{N-3}{4}\pi)}}{|x|^{\frac{N-1}{2}}} \hat{f}\big(\sqrt{a_2} \frac{x}{|x|}\big),
  \qquad\text{if } \alpha>0,\ \beta < -2\sqrt{\alpha}. 
\end{align*}
Here, $a_1,a_2$ are given as in~\eqref{eq:def_a1a2} and $f$ is chosen such that its Fourier transform on
spheres is well-defined. Our main result is the following.

\begin{thm} \label{mainthm}
  Assume (A1),(A2). Then there exists a nontrivial solution $u\in W^{4,q} (\R^N)\cap
  C^{3,\alpha}(\R^N)$ for all $q\in [p,\infty)$, $\alpha \in (0,1)$ to
  \begin{equation} \label{intro}
  \Delta^2 u - \beta \Delta u +\alpha u = \Gamma |u|^{p-2} u\quad\text{ in } \R^N
  \end{equation}
  satisfying the farfield expansion  
  \begin{equation} \label{eq:farfield_expansion}
    \lim_{R\rightarrow \infty} \frac{1}{R}\int_{B_R} |u(x) - \Real(U_f)(x)|^2 \,dx =0 
  \end{equation}
  for $f:= \Gamma |u|^{p-2}u$.
\end{thm}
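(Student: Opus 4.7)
I would follow the dual variational scheme of Ev\'equoz-Weth, adapted to the fourth order operator $L=\Delta^2-\beta\Delta+\alpha$. The three ingredients are: a suitable resolvent $\mathfrak R$ with good $L^{p'}\to L^p$ and regularity estimates; a non-trivial critical point of an associated dual functional on $L^{p'}(\R^N)$; a bootstrap and a stationary-phase analysis to obtain \eqref{eq:farfield_expansion}. The three cases in (A1) correspond exactly to the three admissible sign configurations of the numbers $a_1,a_2$ defined in \eqref{eq:def_a1a2}.

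\textbf{Resolvent construction and estimates.} Using the factorization $L=(-\Delta-a_1)(-\Delta-a_2)$, I would define by partial fractions
\begin{equation*}
\mathfrak R := \frac{1}{a_1-a_2}\bigl(\mathcal R_{a_1}-\mathcal R_{a_2}\bigr) = \frac{1}{\sqrt{\beta^2-4\alpha}}\bigl(\mathcal R_{a_1}-\mathcal R_{a_2}\bigr),
\end{equation*}
where $\mathcal R_{a_i}=(-\Delta-a_i)^{-1}$ is the Bessel potential when $a_i<0$, the Riesz potential when $a_i=0$ (allowed only for $N\ge 3$, explaining the dimensional restriction in (A1)(c)) and the limiting absorption resolvent of \cite{EW} when $a_i>0$. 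I would then show $\mathfrak R\colon L^{p'}(\R^N)\to L^p(\R^N)$ boundedness in the range of (A2). The non-oscillatory pieces are controlled by Calder\'on-Zygmund and Hardy-Littlewood-Sobolev inequalities; the Helmholtz pieces require the Stein-Tomas estimate (Theorem \ref{STT}), which is what forces the lower bound $\frac{2(N+1)}{N-1}<p$. Smoothing properties, namely $\mathfrak R\colon L^q(\R^N)\to W^{4,q}_{\mathrm{loc}}(\R^N)$ for $q\in[p',\infty)$, follow by interpreting $\mathfrak R$ as two successive applications of second-order resolvents and using standard elliptic regularity.

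\textbf{Dual formulation and existence.} Setting $v:=\Gamma^{1/p'}|u|^{p-2}u$ transforms \eqref{intro} into the dual equation $|v|^{p'-2}v=\Gamma^{1/p}\,\Real\mathfrak R(\Gamma^{1/p}v)$, whose weak solutions are critical points of
\begin{equation*}
J(v) := \frac{1}{p'}\int_{\R^N}|v|^{p'}\,dx - \frac{1}{2}\int_{\R^N}\Gamma^{1/p}v\,\Real\mathfrak R\bigl(\Gamma^{1/p}v\bigr)\,dx
\end{equation*}
on $L^{p'}(\R^N)$. The mountain-pass geometry should follow from the $L^{p'}\to L^p$ estimate on $\Real\mathfrak R$ (for the boundedness from below near $0$) together with the existence of some $v_0$ making the quadratic form positive; the latter is provided by the non-negativity of the imaginary part of the limiting absorption resolvent, translated into a positivity statement on $\Real\mathfrak R$ via a Herglotz-type representation on the Fourier side. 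The main obstacle is the loss of compactness: $\mathfrak R$ is translation invariant, $\Gamma$ only $\Z^N$-periodic, and no Lebesgue embedding involved is compact. I would follow \cite{EW} and use a Lions-type vanishing lemma tailored to $\mathfrak R$: a bounded Palais-Smale sequence $(v_n)$ at the mountain pass level either vanishes uniformly in $L^{p'}_{\mathrm{unif}}(\R^N)$, in which case $\Real\mathfrak R(\Gamma^{1/p}v_n)\to 0$ in $L^p(\R^N)$ and $v_n\to 0$ in $L^{p'}(\R^N)$, contradicting the positive mountain pass level, or it admits a sequence of $\Z^N$-translates converging weakly to a non-trivial critical point $v$ of $J$.

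\textbf{Regularity and far field.} Given $v\ne 0$, I set $u:=\Real\mathfrak R(\Gamma^{1/p}v)$; then $u$ is a weak solution of \eqref{intro} and $u\in L^p(\R^N)$. A bootstrap using the smoothing properties of $\mathfrak R$ (iteratively $u\in L^q\Rightarrow \Gamma|u|^{p-2}u\in L^{q/(p-1)}\Rightarrow u\in W^{4,q/(p-1)}$) together with the Sobolev embeddings $W^{4,q}\hookrightarrow L^\infty\cap C^{3,\alpha}$ yields $u\in W^{4,q}(\R^N)\cap C^{3,\alpha}(\R^N)$ for all $q\in[p,\infty)$ and $\alpha\in(0,1)$. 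The farfield formula \eqref{eq:farfield_expansion} is the technically most delicate step: for each $i$ with $a_i>0$ the resolvent kernel of $\mathcal R_{a_i}$ has a Bessel-function representation whose large-argument asymptotics produces the outgoing spherical wave $|x|^{-(N-1)/2}e^{i(\sqrt{a_i}|x|-(N-3)\pi/4)}$ weighted by $\hat f\bigl(\sqrt{a_i}\,x/|x|\bigr)$; the prefactor $1/\sqrt{\beta^2-4\alpha}$ comes from the partial fraction coefficient. For $a_i=0$ the Newtonian potential contributes the non-oscillatory tail $c_N|x|^{2-N}$, which is absorbed into the Ces\`aro mean when $N>3$ but yields the additional $(4\pi|\beta||x|)^{-1}\int f$ correction in the borderline case $N=3$. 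The Ces\`aro-type convergence in \eqref{eq:farfield_expansion} should then follow from pointwise asymptotics combined with a Stein-Tomas-based control of the Fourier restriction $\hat f|_{\sqrt{a_i}S^{N-1}}$ (whose $L^2$ integrability on the sphere is precisely what the Stein-Tomas exponent range in (A2) guarantees); I expect this verification, and a careful handling of the three cases, to be the main effort.
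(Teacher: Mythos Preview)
Your proposal is correct and follows essentially the same route as the paper: the dual variational approach of Ev\'equoz--Weth applied to $\bold R=\Real\mathfrak R$, with the resolvent built by partial fractions from $\mathcal R_{a_1},\mathcal R_{a_2}$, Stein--Tomas estimates for the $L^{p'}\to L^p$ bound, a mountain-pass argument combined with a nonvanishing lemma for periodic $\Gamma$, and then regularity plus the farfield expansion via the known second-order asymptotics. Two minor deviations worth noting: for the mountain-pass geometry the paper verifies positivity of the quadratic form more directly by choosing a test function with Fourier support in $\{|\xi|^2>a_1\}$ (so the symbol $((|\xi|^2-a_1)(|\xi|^2-a_2))^{-1}$ is manifestly positive) rather than invoking a Herglotz-type argument; and for the farfield the paper first establishes the averaged statement \eqref{eq:farfield_expansion} by reducing, via the partial fraction decomposition, to the second-order result of \cite{EW}, and only afterwards derives the pointwise version under an extra hypothesis on $p$---your sketch reverses this order, but either direction works.
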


As in \cite{EW,MMP} one may put slightly different assumptions on $\Gamma$ that still
ensure the existence of nontrivial solutions. For instance, replacing the periodicity assumption on $\Gamma$
by $\Gamma(x)\to 0$ as $|x|\to\infty$ as in Theorem~1.2 \cite{EW}, the dual variational approach benefits from
even better compactness properties that allow to prove the existence of infinitely many solutions via the
Symmetric Mountain Pass Theorem. Similarly, $\Gamma$ may be replaced by $-\Gamma$ as was pointed out in 
Section~3 of~\cite{MMP}. In Remark~\ref{rmqradial} we also comment on the radially symmetric case where one
can prove the existence of solutions for a strictly larger range of exponents.  
Concerning the qualitative properties of the solution granted by the above theorem, we
can actually say more. Under mild additional assumptions (which are not even needed in the physically most
important case $N=3$) we can show that $u$ satisfies a radiation condition at infinity. Moreover, we will
show that the farfield expansion~\eqref{eq:farfield_expansion} has a simple pointwise counterpart
 $u(x)=\Real(U_f)(x)+o(|x|^{\frac{1-N}{2}})$ as $|x|\to\infty$ and it is expected,    
 as in the second order case, that solutions to~\eqref{intro} should not decay faster than
 $O(|x|^{\frac{1-N}{2}})$.
 So far, however, it is unclear how to prove such a claim even in the radial setting since 
 methods from \cite{MMP} do not seem to be easily generalizable to the fourth order case. Let us remark that
 numerical considerations indicate that radial solutions in the parameter ranges $\alpha<0$ and 
 $\alpha>0,\beta<-2\sqrt\alpha$ (for constant $\Gamma$, say) behave rather differently. While the solutions
 in the former case seem to remain bounded with oscillatory behaviour for all small initial data, the
 solutions in the latter case seem to be unbounded for most initial data so that we expect
 $L^p(\R^N)$-solutions as the ones from Theorem~\ref{mainthm} only at exceptional initial values. In particular, there is little hope 
 to treat this case by the methods from \cite{MMP}.

 \medskip

 The plan of this paper is the following: in Section $2$, we introduce some notation and provide a few
 preliminary results. In particular, the construction of the resolvent-type operators $\mathcal
 R_a$ for the second order case and $\mathfrak R$ for the fourth order operator are explained. 
 In Section $3$, we prove the equivalent of Guti\' errez' a priori estimates (Theorem~6 in~\cite{Gu}) for our
 fourth order operator by proving $(L^p,L^q)$-estimates for $\mathfrak R$. In Section~4 and Section~5 we will prove
 the claims from Theorem~\ref{mainthm}. In Section $4$, using the dual variational approach of \cite{EW}, we
 show the existence of a solution $u\in L^p(\R^N)$ to \eqref{intro}. In Section $5$, qualitative properties of
 this solution such as its regularity and~\eqref{eq:farfield_expansion} will be established.

\section{Preliminaries} \label{sec:preliminaries}

The Fourier transform of a Schwartz function $f\in\mathcal S(\R^N)$ is defined via 
$$
  \hat f(\xi) := \frac{1}{(2\pi)^{N/2}} \int_{\R^N} f(x)e^{-ix\xi}\,dx.
$$
As an $L^2$-isometry, the Fourier transform may be extended to tempered distributions and in
particular to $f\in L^q(\R^N),q\in [1,\infty]$. As in the second order case, we have to construct a
resolvent-type operator $\mathfrak R$ associated with $L=\Delta^2-\beta\Delta+\alpha$. Since this is based on
the corresponding approach to Helmholtz operators via~\eqref{eq:def_a1a2}, let us describe this situation
first.
The fundamental solution $g_a$ of the Helmholtz operator $-\Delta-a,a>0$ is given by
\begin{equation} \label{greenhelm}
  g_a (x)=\frac{i}{4}\Big(\frac{2\pi |x|}{\sqrt{a}}\Big)^{\frac{2-N}{2}} H^{(1)}_{\frac{N-2}{2}}(\sqrt{a}
  |x|),
\end{equation}
see (4.21) in~\cite{Leis}, so that $(-\Delta -a) g_a=\delta$ holds in the distributional sense on $\R^N$.
Here, $H^{(1)}_{(N-2)/2}$ denotes the Hankel function of the first kind of order $(N-2)/2$. From the formulas~9.1.12,9.1.13 and~9.2.1.-9.2.3 in \cite{AbrSte_handbook} we get the following asymptotics:
\begin{align} \label{eq:asymptoticsH}
  \begin{aligned}
  H^{(1)}_{\frac{N-2}{2}}(r)
  &\sim \frac{2}{\sqrt{\pi r}} \Big(e^{i(r-\frac{N-3}{4}\pi)} + O\big(\frac{1}{r} \big)\Big)  
  &&\text{as }r\to \infty, \\
  H^{(1)}_{\frac{N-2}{2}}(r) 
  &\sim \frac{2i}{\pi}\ln \left(\frac{r}{2} \right)
    +O(1)
    &&\text{as }r\to 0^+,\;N=2, \\
  H^{(1)}_{\frac{N-2}{2}}(r) 
  &\sim  -i\sqrt{\frac{2}{\pi r}}  
  +O(r^{1/2})  
   &&\text{as }r\to 0^+,\;N=3, \\
  H^{(1)}_{\frac{N-2}{2}}(r) 
  &\sim -\frac{2i}{\pi r}+ \frac{2i}{\pi} r \ln\left(\frac{r}{2}\right)+ O(r) 
  &&\text{as }r\to 0^+,\;N=4, \\
   H^{(1)}_{\frac{N-2}{2}}(r) 
  &\sim -\frac{\Gamma(\frac{N-2}{2})i}{\pi}
  \left(\frac{2}{r}\right)^{\frac{N-2}{2}}+O(r^{\frac{6-N}{2}})\qquad &&\text{as }r\to 0^+,\;N\geq 5.
  \end{aligned}
\end{align} 
In particular, for all $a>0$ we have the following estimate: 
\begin{equation} \label{asympbessel}
  |g_a(x)| \leq C(|x|^{2-N}+|\log(|x|)|) \quad (0<|x|\leq 1),\qquad
  |g_a(x)| \leq C|x|^{\frac{1-N}{2}} \quad (|x|\geq 1).
\end{equation}
In the case $N\geq 3$ we have $g_0(r)=\frac{1}{(N-2)N\omega_N} r^{2-N}$ where $\omega_N$ denotes the area of
the sphere $S^{N-1}$, see (4.1) in \cite{GiTr}. Hence, \eqref{asympbessel} also holds in the case
$a=0,N\geq 3$, but not for $N=2$ because $g_0(x)=\frac{1}{2\pi}\log(|x|)$ grows logarithmically at infinity.
This is responsible for the extra assumption $N\geq 3$ in the case $\alpha=0,\beta<0$ from assumption (A1).
For positive $a$ the functions $g_a$ are known to satisfy the Sommerfeld radiation condition at infinity:
\begin{equation}\label{eq:Sommerfeld_ga}
  \nabla g_a(x) - i \sqrt{a} g_a(x) \frac{x}{|x|} = O(|x|^{-\frac{N+1}{2}}) \quad\text{as }|x|\to\infty.
\end{equation}
The resolvent-type operator $\mathcal R_a$ associated with $-\Delta-a$ for $a>0$ is then defined via 
\begin{equation} \label{eq:definition_2ndorder_resolvent}
  \mathcal{R}_a f := \lim_{\varepsilon \rightarrow 0^+} \mathcal{R}_{a+i\varepsilon} f,
  \quad\text{where }
  (\mathcal{R}_{a+i\varepsilon} f)(x) :=\frac{1}{(2\pi)^{N/2}} \int_{\R^N}e^{i x \xi} 
  \frac{\hat{f}(\xi)}{|\xi|^2 - (a+i\varepsilon) }\,d\xi. 
\end{equation}
Notice that the same formula holds in the case $a\leq 0$. Here, the limit has to be understood in the $L^p(\R^N)$-sense for $\frac{2(N+1)}{N-1}\leq
p\leq \frac{2N}{N-2}$ whenever $f\in L^{p^\prime}(\R^N)$. In fact, Guti\'errez proved in \cite{Gu}[Theorem 6]
the estimate $\|\mathcal{R}_{a+i\varepsilon} f \|_{L^p (\R^N)}\leq \| f \|_{L^{p^\prime} (\R^N)}$ for all
Schwartz functions $f\in \mathcal{S}(\R^N)$, so that the operator $\mathcal R_a$ is a well-defined bounded
linear operator from $L^{p'}(\R^N)$ to $L^p(\R^n)$ by the Uniform Boundedness Principle.  Moreover, this operator may be expressed
in terms of the fundamental solution $g_a$ from~\eqref{greenhelm} via 
\begin{equation} \label{eq:EWresolvent_vs_ga1}
  (\mathcal{R}_a f)(x)
  =  (g_a \ast f)(x) 
  = \frac{1}{a} \mathcal{R}_1 \Big( f(\frac{\cdot}{\sqrt a})\Big)(\sqrt a x) 
  \qquad\text{for } f\in \mathcal{S}(\R^N),a>0.
\end{equation}

\medskip

In view of~\eqref{eq:def_a1a2} the corresponding quantities for the fourth order operator $L=\Delta^2
-\beta\Delta + \alpha$ may be defined analogously. We put
\begin{align}\label{eq:def_G}
  \begin{aligned}
  G &:=\frac{1}{\sqrt{\beta^2-4\alpha}} (g_{a_1} - g_{a_2}), \quad
  \hat G(\xi) 
  &= \frac{1}{\sqrt{\beta^2-4\alpha}} \Big(
  \frac{1}{|\xi|^2-a_1} - \frac{1}{|\xi|^2-a_2}\Big) \quad( |\xi|\neq a_1,a_2)
\end{aligned}
\end{align} 
we find $\Delta^2 G -\beta \Delta G +\alpha G= \delta$ in the distributional sense on $\R^N$ so that $G$ is a
fundamental solution of $L$.
Notice that formally the same definition has been used in \cite{BCDN}[Proposition $3.13$] when $a_1,a_2<0$,
while our focus lies on the cases $a_1>0>a_2$ or $a_1>a_2>0$ or $a_1=0>a_2$. 
From \eqref{greenhelm}--\eqref{asympbessel} we deduce (taking into account the cancellations at zero)
\begin{align} \label{asymgreen}
  \begin{aligned}
  |G(x)| &\leq \begin{cases}
    C(|x|^{4-N} +|\log(|x|)|) &, N\geq 4 \\
    C &,N \in\{2,3\}
  \end{cases}\quad &&(0<|x|\leq 1),\\
   |G(x)| &\leq C|x|^{(1-N)/2}  && (|x|\geq 1)
\end{aligned}
\end{align}
Moreover, from~\eqref{eq:Sommerfeld_ga} we get that $G$ satisfies a variant of Sommerfeld's outgoing radiation
condition (see \cite{CoDa}) given by  
\begin{align}   \label{sommerfeld}
  \begin{aligned}
   |\nabla G (x) - i \sqrt{a_1} G(x) \frac{x}{|x|} |&=o(|x|^{\frac{1-N}{2}})\ \text{as } |x|\rightarrow
  \infty ,\text{if }a_1>0>a_2,\\
  |\nabla G (x) - \dfrac{i}{a_1-a_2}(\sqrt{a_1}g_{a_1}(x)-\sqrt{a_2} g_{a_2}(x) )\frac{x}{|x|}|
 & =o(|x|^{\frac{1-N}{2}})\ \text{as } |x|\rightarrow \infty,\ \text{if}\ a_1> a_2>0.
\end{aligned}
\end{align}
Notice that in the case $a_1>0>a_2$ the functions $g_{a_2},g_{a_2}'$ decrease
exponentially and hence much faster than $g_{a_1},g_{a_1}'$ at infinity so that \eqref{eq:def_G} allows to
deduce the Sommerfeld condition from~\eqref{eq:Sommerfeld_ga}. Here, $g_{a_2}$ denotes the
Green's function of the Schr\"odinger operator $-\Delta-a_2$. As to the case $a_1>a_2>0$ let us remark
$\sqrt{\beta^2-4\alpha}=a_1-a_2$. Motivated by \eqref{eq:definition_2ndorder_resolvent}--\eqref{eq:def_G}
we may now define  
\begin{equation}\label{eq:def_resolvent}
  \mathfrak {R} f 
  := \lim_{\varepsilon \rightarrow 0}  \frac{1}{\sqrt{\beta^2-4\alpha}}\big( \mathcal
  R_{a_1+i\eps}f - \mathcal R_{a_2+i\eps} f\big).  
\end{equation}
Being interested in real-valued solutions of~\eqref{intro} we will need $\bold R:=\Real(\mathfrak R)$.
We will show in Theorem~\ref{thminvopbound} that $\mathfrak{R}$ is a (complex-valued) bounded linear operator between 
$L^p(\R^N)$ and $L^q(\R^N)$ such that $\bold{R} f$ defines a (real-valued) distributional solution
of $\Delta^2 u -\beta \Delta u +\alpha u =f$ provided $f\in L^p(\R^N)$. Actually, better
qualitative properties of $u$ will be shown in
Section~\ref{sec:qualitative}. From~\eqref{eq:def_resolvent} and arguing as in Lemma~4.1 in \cite{EW} 
we get
\begin{align} \label{eq:resolvent_symmetry}
  \int_{\R^N}  (\bold{R} f) g\,dx
  = \int_{\R^N}   f (\bold{R} g)\,dx
 \end{align}
for all $f,g \in \mathcal{S}(\R^N)$. In the next section we provide the $(L^p,L^q)$-estimates for $\mathfrak
R,\bold R$. For notational convenience, the symbol $C$ will stand for a positive number that may change from
line to line.

\section{Resolvent estimates}

In this section we investigate the continuity properties of the resolvent $\mathfrak R$ as an operator
between Lebesgue spaces on $\R^N$. As in the paper by Ev\'equoz and Weth \cite{EW} these properties turn out to
be crucial for proving the existence of solutions of~\eqref{4nls} via a dual variational approach, which we
will set up in the next section. As in the proof of Theorem~6 in~\cite{Gu} the continuity properties are
established via interpolation and the Stein-Tomas theorem (see \cite{Tom_A_restriction} and p.386
in~\cite{Ste_harmonic}). Denoting by $S^{N-1}$ the unit sphere in $\R^N$ and by $\sigma$ the canonical surface
measure on $S^{N-1}$, this theorem reads as follows.

\begin{thm}[Stein-Tomas] \label{STT}
  Let $1\leq p\leq \frac{2(N+1)}{N+3}$. Then there is a $C>0$ such that for all $g\in \mathcal S(\R^N)$ the
  following inequality holds: 
  \begin{equation*} 
    \Big(\int_{S^{N-1}} |\hat{g}(r\omega)|^2 \,d\sigma(\omega)\Big)^{1/2} 
    \leq C  r^{-N(1-1/p)}\|g\|_{L^p(\R^N)}.
  \end{equation*} 
\end{thm}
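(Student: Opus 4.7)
My plan is to recognize the stated inequality as the classical Stein--Tomas restriction theorem written in rescaled form. The first move is to reduce to the unit-scale case $r=1$ by dilation, and the second is to recall the standard $TT^*$ proof of the unit-scale estimate.

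For the scaling reduction, fix $r>0$ and set $h(x):=g(x/r)$. With the Fourier convention used in the paper a direct computation gives $\hat h(\xi)=r^{N}\hat g(r\xi)$ and $\|h\|_{L^p(\R^N)}=r^{N/p}\|g\|_{L^p(\R^N)}$. Applying the $r=1$ statement to $h$ and using the identity $\hat g(r\omega)=r^{-N}\hat h(\omega)$ on the sphere produces the factor $r^{N/p-N}=r^{-N(1-1/p)}$ on the right-hand side. Hence it suffices to establish the case $r=1$.

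For the unit-scale estimate, I would apply the $TT^*$ principle to the restriction operator $R\colon \mathcal S(\R^N)\to L^2(S^{N-1})$ defined by $Rg:=\hat g|_{S^{N-1}}$. Its adjoint is the extension map $R^*f=\widehat{f\,d\sigma}$, and the desired estimate is equivalent to the $L^p\to L^{p'}$ boundedness of $R^*R$, which is convolution with the tempered distribution $K:=\widehat{d\sigma}$. The central analytic input is the stationary-phase (or Bessel function) asymptotic
\[
|K(x)|\leq C(1+|x|)^{-(N-1)/2},
\]
valid because $S^{N-1}$ has everywhere non-vanishing Gaussian curvature. Writing $K=\sum_{j\geq 0}K_j$ via a smooth Littlewood--Paley decomposition supported in dyadic shells $|x|\sim 2^j$, each piece satisfies $\|K_j\|_{L^\infty}\leq C\,2^{-j(N-1)/2}$ (from the pointwise bound) while $\|\hat K_j\|_{L^\infty}\leq C\,2^{-j}$ (via an oscillatory integral bound that again exploits the curvature of the sphere). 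Young and Plancherel then give $L^1\to L^\infty$ and $L^2\to L^2$ bounds on convolution with $K_j$, and Riesz--Thorin interpolation at $p=2(N+1)/(N+3)$, $p'=2(N+1)/(N-1)$ produces exactly the geometric rate that allows summation in the open range $p<2(N+1)/(N+3)$.

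The main obstacle is the endpoint $p=2(N+1)/(N+3)$ itself, where Riesz--Thorin only yields bounds on $g\ast K_j$ that are uniform in $j$ and the naive sum diverges. Following Tomas, I would resolve this by invoking Stein's complex interpolation theorem applied to an analytic family $T_zg:=g\ast\widehat{\mu_z}$, where $\mu_z$ is a Riesz-type analytic continuation of $d\sigma$ built by multiplying a tubular neighborhood cut-off of $S^{N-1}$ by a complex power of the signed distance to the sphere. The family is easily bounded $L^1\to L^\infty$ for $\Real z$ large enough and $L^2\to L^2$ on the line $\Real z=0$ by Plancherel together with the smoothness of $\mu_{it}$ away from $S^{N-1}$; interpolating at the critical intermediate line recovers the endpoint $L^p\to L^{p'}$ bound without ever summing a divergent geometric series.
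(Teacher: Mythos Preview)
Your proposal is correct. The paper itself does not prove this statement: it simply cites the classical Stein--Tomas theorem (Tomas and Stein's harmonic analysis text) and remarks in one sentence that the case $r\neq 1$ follows from $r=1$ by rescaling. Your scaling reduction is exactly this remark, and you then go substantially further by sketching the full $TT^*$ argument with dyadic decomposition of $\widehat{d\sigma}$ and Stein's complex interpolation for the endpoint---none of which appears in the paper. So the approaches agree on the only step the paper actually carries out (rescaling), and your additional outline of the classical proof is a sound bonus rather than a divergence.
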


Notice that the Stein-Tomas inequality for $r\neq 1$ follows from the classical one ($r=1$) by rescaling.
Moreover, we will use the Riesz-Thorin interpolation theorem, see for instance Theorem~1.3.4 in
\cite{Grafakos}.

\begin{thm}[Riesz-Thorin] \label{thm:Riesz_Thorin}
If $T: L^{p_0}(\R^N)+L^{p_1}(\R^N)\to L^{q_0}(\R^N)+L^{q_1}(\R^N)$ such that $\|T\|_{L^{p_0}(\R^N) \to
L^{q_0}(\R^N)}\leq M_0$ and $\|T\|_{L^{p_1}(\R^N)\to  L^{q_1}(\R^N)}\leq M_1$, then, we have 
$$
  \|T\|_{L^p(\R^N) \to L^q(\R^N)}\leq M_0^{1-\theta}M_1^\theta
$$ 
provided that 
$$
  \frac{1}{p}= \frac{1-\theta}{p_0} + \frac{\theta}{p_1} \quad \text{ and } \quad 
  \frac{1}{q}= \frac{1-\theta}{q_0} + \frac{\theta}{q_1}. 
$$ 
\end{thm}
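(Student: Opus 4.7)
The plan is to establish Riesz--Thorin by complex interpolation, specifically through an application of the Hadamard three lines lemma to an analytic family of bilinear pairings that connects the $L^{p_0}\to L^{q_0}$ and $L^{p_1}\to L^{q_1}$ bounds for $T$. By duality together with a density argument, it suffices to show that
\[
\Big| \int_{\R^N} (Tf)(x)\, g(x)\,dx \Big| \leq M_0^{1-\theta} M_1^\theta
\]
whenever $f,g$ are simple functions (with finitely many values, supported on sets of finite measure) normalized so that $\|f\|_{L^p(\R^N)} = \|g\|_{L^{q'}(\R^N)} = 1$, since the operator norm of $T$ from $L^p$ to $L^q$ equals the supremum of the bilinear pairing over such $g$.

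Next, I would embed the pair $(f,g)$ into an analytic family indexed by $z$ in the closed strip $S := \{z\in\Cc : 0 \leq \Real(z) \leq 1\}$. Setting
\[
\alpha(z) := \frac{1-z}{p_0} + \frac{z}{p_1}, \qquad \beta(z) := \frac{1-z}{q_0'} + \frac{z}{q_1'},
\]
one has $\alpha(\theta)=1/p$ and $\beta(\theta)=1/q'$. Define
\[
f_z := |f|^{p\,\alpha(z)}\,\mathrm{sgn}(f), \qquad g_z := |g|^{q'\,\beta(z)}\,\mathrm{sgn}(g),
\]
so that $f_\theta = f$ and $g_\theta = g$. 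Because $f,g$ are simple, the scalar function
\[
F(z) := \int_{\R^N} (Tf_z)(x)\, g_z(x)\,dx
\]
is a finite sum of exponentials in $z$ and is therefore entire, continuous up to $\partial S$, and bounded on $S$, which are exactly the hypotheses of the three lines lemma.

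The boundary estimates then come from straightforward computations: on $\Real(z)=0$, $\|f_{it}\|_{L^{p_0}} = \|f\|_{L^p}^{p/p_0} = 1$ and $\|g_{it}\|_{L^{q_0'}} = 1$, so H\"older's inequality combined with the assumed $L^{p_0}\to L^{q_0}$ bound for $T$ yields $|F(it)|\leq M_0$, and an analogous calculation gives $|F(1+it)|\leq M_1$. The three lines lemma then produces $|F(\theta)|\leq M_0^{1-\theta} M_1^\theta$, which is exactly the required inequality.

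The main obstacle is the design of the interpolating family $(f_z,g_z)$ so that $f_z\in L^{p_0}\cap L^{p_1}$ and $g_z\in L^{q_0'}\cap L^{q_1'}$ on the respective boundary components of $S$ while remaining genuinely analytic in $z$ and reducing to $(f,g)$ at $z=\theta$; the geometric choice $|f|^{p\alpha(z)}\mathrm{sgn}(f)$ is exactly what is dictated by the linear interpolation of reciprocal exponents. Restricting to simple functions is what makes analyticity (and boundedness on $S$) automatic, since the issue otherwise would be justifying differentiation under the integral; once this careful setup is in place, the rest of the argument is essentially mechanical.
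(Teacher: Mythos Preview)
Your proof sketch is the standard and correct complex-interpolation argument for Riesz--Thorin via the Hadamard three lines lemma. Note, however, that the paper does not prove this theorem at all: it is stated as a known tool with a reference to Theorem~1.3.4 in Grafakos, so there is no ``paper's own proof'' to compare against. Your outline is essentially the argument one finds in that reference.
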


With these preliminary results at hand we are now in the position to prove the resolvent estimates. We 
closely follow the proof of Theorem~6 in Guti\'{e}rrez' paper~\cite{Gu} along with its generalizations from
Theorem~2.1 in~\cite{Ev}. 
 
\begin{thm}  \label{thminvopbound}
  Assume (A1). Then the operator $\mathfrak{R}$ defined by \eqref{eq:def_resolvent} extends to a
  bounded linear operator $\mathfrak{R}:L^p(\R^N)\to L^q(\R^N)$, i.e. 
  \begin{equation} \label{eq:resolvent_estimate}
    \|\mathfrak{R} f \|_{L^q(\R^N)}
    \leq C \|f\|_{L^p (\R^N)},
  \end{equation}
  provided that $p,q\in [1,\infty]$ satisfy
  \begin{equation}
  \label{condgutie}
	\frac{2}{N+1}\leq \frac{1}{p}-\frac{1}{q} \begin{cases}
	  \leq 1 &,\text{if } N\in\{2,3\}\\
	  < 1 &,\text{if } N=4 \\
	  \leq \frac{4}{N} &,\text{if } N\geq 5\\
	\end{cases}, \quad\; 
	\frac{1}{p}> \frac{N+1}{2N},\quad\; \frac{1}{q}<\frac{N-1}{2N}.
  \end{equation}
  In particular, \eqref{eq:resolvent_estimate} holds for $q=p^\prime$ whenever
  $\frac{2(N+1)}{N-1}\leq q\leq \infty$ for $N\in\{2,3\}$, $\frac{10}{3}\leq q<\infty$ for $N=4$ or
  $\frac{2(N+1)}{N-1}\leq q\leq \frac{2N}{N-4}$ for $N\geq 5$.
\end{thm}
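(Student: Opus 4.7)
The plan is to closely follow the proof of Theorem~6 in~\cite{Gu} and its generalization in Theorem~2.1 of~\cite{Ev}, adapting both to the fourth-order situation where the Fourier symbol $\hat{G}(\xi) = \big((|\xi|^2 - a_1)(|\xi|^2 - a_2)\big)^{-1}$ decays like $|\xi|^{-4}$ at infinity rather than $|\xi|^{-2}$. The overall strategy is to split $\hat{G}$ into a distributional part localized near the positive singular spheres $\{|\xi|^2 = a_i\}_{a_i > 0}$ and a regular tail, to bound each piece on an endpoint pair $(p,q)$, and then to fill in the admissible region~\eqref{condgutie} by Riesz--Thorin interpolation (Theorem~\ref{thm:Riesz_Thorin}).

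First I would establish the Stein--Tomas endpoint $\|\mathfrak{R} f\|_{L^{2(N+1)/(N-1)}(\R^N)} \leq C \|f\|_{L^{2(N+1)/(N+3)}(\R^N)}$, which corresponds to $\frac{1}{p} - \frac{1}{q} = \frac{2}{N+1}$ in~\eqref{condgutie}. This reduces to estimating the distributional boundary value $\lim_{\eps \to 0^+} \chi(\xi)\big(|\xi|^2 - a_i - i\eps\big)^{-1}$ for a smooth cutoff $\chi$ supported in a neighbourhood of a positive singular sphere: via a dyadic decomposition in the distance to the sphere together with Theorem~\ref{STT} (applied in its extension/dual form after rescaling from $r = 1$ to $r = \sqrt{a_i}$), one obtains the claimed estimate uniformly in $\eps$ and hence justifies passage to the limit in~\eqref{eq:def_resolvent} in the operator-norm topology. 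In case~(b) both singular spheres are present and are handled independently by a smooth partition of unity, while in cases~(a) and~(c) only $\{|\xi| = \sqrt{a_1}\}$ is singular and the factor $(|\xi|^2 - a_2)^{-1}$ is merely a smooth bounded (respectively, locally integrable in case~(c)) multiplier in the localized symbol, contributing nothing new.

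Second, the regular tail of $\hat G$ is a smooth multiplier decaying like $|\xi|^{-4}$ at infinity, whose inverse Fourier transform has the local behaviour recorded in~\eqref{asymgreen} together with rapid decay at infinity. Standard Young and Hardy--Littlewood--Sobolev estimates then give $L^p \to L^q$ boundedness for $0 \leq \frac{1}{p} - \frac{1}{q} \leq \frac{4}{N}$, with endpoint restrictions matching exactly the upper bounds in~\eqref{condgutie}: only $q \geq p$ is required when $N \in \{2, 3\}$ (since $G$ is locally bounded there), strict inequality $\frac{1}{p} - \frac{1}{q} < 1$ is needed when $N = 4$ (because of the local $|\log|x||$ singularity of $G$), and the HLS endpoint $\frac{1}{p} - \frac{1}{q} = \frac{4}{N}$ is admissible when $N \geq 5$ (from the local $|x|^{4-N}$ singularity). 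Interpolating between these Bessel endpoints and the Stein--Tomas endpoint via Theorem~\ref{thm:Riesz_Thorin} then fills in the full region~\eqref{condgutie}, and the ``in particular'' clause follows by setting $q = p'$, substituting $\frac{1}{q} = 1 - \frac{1}{p}$ and computing the resulting admissible range of $p$ in each dimension.

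The main obstacle is the Stein--Tomas endpoint: it is the only step where the fourth-order structure plays a role substantively different from Gutierrez's original Helmholtz setting (two singular spheres in case~(b), and the need to justify that the cancellation in the difference~\eqref{eq:def_resolvent} does not spoil the distributional $\eps \to 0^+$ limit), and its sharpness at $\frac{1}{p} - \frac{1}{q} = \frac{2}{N+1}$ rests essentially on the full strength of the restriction theorem. Once this endpoint is in place, the rest of the argument is routine real interpolation.
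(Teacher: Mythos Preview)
Your overall strategy matches the paper's: split the kernel into a resonant part $G_1$ (Fourier-localized near the singular sphere(s)) and a nonresonant tail $G_2$, handle $G_2$ by Young/weak-Young, and handle $G_1$ via Stein--Tomas. The treatment of $G_2$ and the dimension-dependent upper bounds on $\tfrac1p-\tfrac1q$ are exactly as in the paper.

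The gap is in your interpolation for the resonant part. You propose to obtain the \emph{single} Stein--Tomas endpoint $(p,q)=\big(\tfrac{2(N+1)}{N+3},\tfrac{2(N+1)}{N-1}\big)$ for $\mathfrak R$ and then interpolate it against the Bessel-type bounds for $G_2$ to ``fill in the full region~\eqref{condgutie}''. This does not work: Riesz--Thorin needs bounds for the \emph{same} operator at both endpoints, and $\mathfrak R$ is \emph{not} bounded on the Bessel strip outside the pentagon (e.g.\ near $p=q=2$ the symbol is singular). Even if you restrict to $G_1\ast\cdot$, interpolating the single Stein--Tomas point with, say, the trivial $L^1\to L^\infty$ bound only produces the duality segment $q=p'$; it gives no access to the off-diagonal constraints $\tfrac1p>\tfrac{N+1}{2N}$, $\tfrac1q<\tfrac{N-1}{2N}$.

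The paper obtains these constraints by decomposing $G_1$ dyadically in \emph{physical} space, $G_1=\sum_j G_1\eta_j$, and for each dyadic piece interpolating a Stein--Tomas $L^{2(N+1)/(N+3)}\to L^2$ bound against a Young $L^{\tilde p}\to L^{\tilde q}$ bound with $j$-dependent constant. Summing the resulting geometric series is precisely what forces $\tfrac1q<\tfrac{N-1}{2N}$; duality then gives $\tfrac1p>\tfrac{N+1}{2N}$, and one further Riesz--Thorin interpolation between these two regions yields the open pentagon. The closed boundary $\tfrac1p-\tfrac1q=\tfrac{2}{N+1}$ away from the self-dual point requires a separate Lorentz-space argument (Guti\'errez, p.~20), which your sketch also omits. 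A Fourier-side dyadic decomposition in distance to the sphere can be made to give the same conclusions, but not with a single endpoint plus Riesz--Thorin alone; the summation/interpolation has to be carried out piecewise as in the paper.

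One further inaccuracy: in case~(c) ($a_2=0$) the tail multiplier $(1-\hat\psi)\hat G$ retains a $|\xi|^{-2}$ singularity at the origin, so its inverse Fourier transform does \emph{not} have rapid spatial decay---only polynomial. The paper tracks this explicitly (compare \eqref{estG2} with \eqref{estG2a_2=0}); you should too, though fortunately the decay is still fast enough for the Young argument to go through.
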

\begin{proof}
  We only deal with the case $a_1>0\geq a_2$. Recall that $a_1,a_2$ were defined in \eqref{eq:def_a1a2}.  We
  will comment on the necessary modifications in the case $a_1>a_2>0$ at the end of the proof. We split the operator $f\mapsto \mathfrak{R}f=G\ast f$ into a resonant
  and a nonresonant part.
  To this end let $\psi \in \mathcal{S}(\R^N)$ be a function such that $\hat{\psi}\in C_c^\infty(\R^N)$ satisfies
$0\leq \hat{\psi}\leq 1$ and
\begin{equation} \label{eq:def_psi}
  \hat{\psi} (\xi)=\begin{cases}
    1 &,\text{if}\ ||\xi|-\sqrt a_1|\leq \frac{\sqrt a_1}{6},\\ 
    0 &,\text{if}\ ||\xi|-\sqrt a_1|\geq \frac{\sqrt a_1}{4}
    \end{cases}. 
\end{equation}
Next define $G_1 := \psi \ast G$ and $G_2:=G -
G_1=(1-\psi)\ast G$.  First we establish pointwise bounds for $G_1$ and $G_2$. 
By \eqref{asymgreen} we know that $|G(x)|\leq C |x|^{\frac{1-N}{2}}$ for $|x|\geq 1$ so that  
$G_1=\psi\ast G$ and  $\psi\in\mathcal{S}(\R^N)$ imply
\begin{equation} \label{gutiee1}
  |G_1 (x)|\leq C (1+|x|)^{\frac{1-N}{2}} \quad\text{for all }x\in\R^N.
\end{equation}
Furthermore, thanks to \eqref{asymgreen} and \eqref{gutiee1}, we deduce   
$$
  |G_2(x)|\leq \begin{cases}
     C |x|^{4-N} &, \text{if}\ N> 4\\ 
     C(1+|\log|x||)&, \text{if}\ N=4 \\
 C&, \text{if}\ N=\{2,3\}, \\
    \end{cases}
  \qquad\text{for }|x|\leq 1.
$$
 Since $\hat{G}_2= (1-\hat \psi ) \hat G$ and $\hat G$ is given by \eqref{eq:def_G}, we find 
 $\partial^{\gamma} \hat{G}_2 \in L^1 (\R^N)$ for all multi-indices $\gamma \in \N_0^N$ such that
 $|\gamma|\geq N-3$ if $a_2>0$ resp. $|\gamma|\in \{N-3, N-2, N-1\}$ if $a_2=0$. Hence, $|G_2(x)|\leq C_s
 |x|^{-s}$ for all $s\geq N-3$ in the case $a_2>0$ whereas $|G_2(x)|\leq C_s
 |x|^{-s}$ for $N-3\leq  s\leq  N-1$ in the case $a_2=0$. From this we deduce  
\begin{equation}
\label{estG2}
|G_2 (x)|\leq \begin{cases} C \min \{|x|^{4-N},  |x|^{-N}\} &,\text{if}\ N> 4,\\ 
  C \min \{1+ |\log|x|||, |x|^{-N}\} &,\text{if}\ N= 4,\\
C \min \{1,|x|^{-N} \}&, \text{if}\ N=\{2,3\}, \\
\end{cases}
  \qquad\text{for all }x\in\R^N,\;a_2>0  
\end{equation}
as well as
\begin{equation}
\label{estG2a_2=0}
|G_2 (x)|\leq \begin{cases} C \min \{|x|^{4-N},  |x|^{-N-1}\} &,\text{if}\ N> 4,\\ 
  C \min \{1+ |\log|x||, |x|^{-N-1}\} &,\text{if}\ N= 4,\\
C \min \{1,|x|^{-N-1} \}&, \text{if}\ N=3, \\
\end{cases}
  \qquad\text{for all }x\in\R^N,\;a_2=0.
\end{equation}

\medskip

We now use these pointwise bounds for $G_2$ in order to prove that the nonresonant part satisfies the mapping
properties asserted above. For $p,q$ as in \eqref{condgutie} we define 
$r\in (\frac{N+1}{N-1},\frac{N}{(N-4)_+})$ via $1+\frac{1}{q}=\frac{1}{r}+\frac{1}{p}$ so that Young's
convolution inequality and $G_2\in L^r(\R^N)$ gives 
\begin{equation}\label{RT0}
  \|G_2 \ast f \|_{L^q(\R^N )} \leq \|G_2\|_{L^r (\R^N )} \|f\|_{L^p(\R^N )} \leq C \|f\|_{L^p(\R^N )}.
\end{equation}
In the limit case $\frac{1}{p}-\frac{1}{q}=1$ and $N\in\{2,3\}$ this inequality follows the same way using 
$G_2\in L^\infty(\R^N)$, see \eqref{estG2}. In the limit case $\frac{1}{p}-\frac{1}{q}=\frac{4}{N}$ and
$N\geq 5$ it follows from $G_2\in L^{N/(N-4),w}(\R^N)$ and Young's inequality for weak Lebesgue
spaces, see Theorem~1.4.24 in~\cite{Grafakos}. 

\medskip 

Next we estimate the resonant term $G_1 \ast f$. To this end let $\eta \in C_{c}^\infty (\R^N) $ be a cut-off
function such that $\eta(x)=1$ for $|x|\leq 1$ and $\eta(x)=0$ if $|x|\geq 2$. For $j\in \N$ we define
$\eta_j(x):=\eta (x/2^j ) - \eta (x /2^{j-1})$ and $\eta_0 :=\eta$. As a
consequence, 
\begin{equation} \label{eq:def_G1j}
  G_1=\sum_{j=0}^\infty G_1^j\quad \text{with}\; G_1^j:=G_1 \eta_j \text{ so that }
  |G_1^j(x)| \leq C 2^{j(1-N)/2}1_{[2^{j-1},2^{j+1}]}(|x|),
\end{equation}
where the latter estimate follows from \eqref{gutiee1}. Next, let $\varphi \in \mathcal{S}(\R^N)$ be chosen
such that 
\begin{equation}\label{eq:def_varphi}
  \hat \varphi (\xi )=
  \begin{cases}1 &,\text{if}\ ||\xi|-\sqrt a_1|\leq \sqrt a_1/2,\\ 
 0 &,\text{if}\ ||\xi |-\sqrt a_1 |\geq 3\sqrt a_1/4.\end{cases}
\end{equation} 
Notice that this definition guarantees $\supp(\hat{G_1})\subset \{\xi\in\R^N: \hat\varphi(\xi) = 1\}$ so that
the following identity holds by the definition of $G_1$ and \eqref{eq:def_psi}: 
 \begin{equation}\label{eq:def_Qj}
   G_1\ast f = (G_1\ast \varphi) \ast f= \sum_{j=0}^\infty Q^j\ast f\qquad \text{with}\ Q^j =G_1^j \ast
   \varphi.
 \end{equation}
 
 \medskip
 
Using  Plancherel's Theorem and the Stein-Tomas Theorem (see Theorem~\ref{STT}) we get for all
$f\in\mathcal{S}(\R^N)$ and $g:=\varphi \ast f$
\begin{align} \label{RT2}
  \begin{aligned}
  \|Q^j \ast f\|_{L^2 (\R^N )}^2
  &= \|G_1^j\ast g\|_{L^2(\R^N)}^2   \\
  &= \int_{||\xi| -\sqrt a_1| \leq 3 \sqrt a_1 /4} |\hat{G}_1^j (\xi ) \hat{g}(\xi )|^2 \,d\xi  \\
  &\leq C \int_{\sqrt a_1/4}^{7\sqrt a_1 /4}r^{N-1} |\hat{G}_1^j (r )|^2 \int_{S^{N-1}}
  |\hat{g}(r \omega )|^2 d\sigma(\omega) \,dr \\
  &\leq C\int_{\sqrt a_1/4}^{7\sqrt a_1/4} r^{N-1}|\hat{G}_1^j (r)|^2 \cdot r^{-2N(1-\frac{N+3}{2(N+1)})}\|g\|_{L^{ 2(N+1)/(N+3)}(\R^N )}^2
  \,dr \\
  &\leq C\|\hat{G}_1^j\|_{L^2(\R^N)}^2 \|g\|_{L^{ 2(N+1)/(N+3)}(\R^N )}^2 \\
&\leq C \|G_1^j\|_{L^2(\R^N)}^2\|\varphi\|_{L^1 (\R^N )}^2\|f\|_{L^{ 2(N+1)/(N+3)}(\R^N )}^2  \\
&\leq C 2^j \|f\|_{L^{ 2(N+1)/(N+3)}(\R^N )}^2.
  \end{aligned}
\end{align}
In the last inequality we estimated the $L^2$-norm of $G_1^j$ by exploiting \eqref{eq:def_G1j}. Furthermore, we derive the inequality
\begin{align}  \label{RT1}
  \begin{aligned}
  \|Q^j \ast f\|_{L^{\tilde q}(\R^N)}
  &\leq \|Q^j\|_{L^r(\R^N )} \|f\|_{L^{\tilde p}(\R^N)} \\ 
  &\leq \|\varphi \|_{L^1(\R^N )} \|G_1^j \|_{L^r(\R^N )}\|f\|_{L^{\tilde p}(\R^N)} \\ 
  &\leq C 2^{j((1-N)/2 + N/r)}\|f\|_{L^{\tilde p}(\R^N)} \\
  &= C 2^{j((1+N)/2 +N/{\tilde q} - N/{\tilde p})}\|f\|_{L^{\tilde p}(\R^N)}
  \qquad\text{if }1\leq \tilde p\leq \tilde q\leq \infty,
  \end{aligned}
\end{align}
and $r\in [1,\infty]$ is defined according to $1+\frac{1}{\tilde q} = \frac{1}{r}+\frac{1}{\tilde p}$. Notice
that in the third inequality we estimated $\|G_1^j\|_{L^r(\R^N)}$ once again by exploiting \eqref{eq:def_G1j}.
Interpolating the estimates \eqref{RT2} and
\eqref{RT1} yields
$$
  \|Q^j \ast f\|_{L^q(\R^N)}
  \leq C 2^{j(1/2 +\theta N(1/2+1/{\tilde q} - 1/{\tilde p}))}\|f\|_{L^p(\R^N)}
  \qquad (j\in\Z)
$$
provided $\frac{1}{p}=\frac{\theta}{\tilde p}+\frac{(1-\theta)(N+3)}{2(N+1)}$ and
$\frac{1}{q}=\frac{\theta}{\tilde q}+\frac{1-\theta}{2}$ with $\theta\in [0,1],\tilde p,\tilde q\in
[1,\infty]$. Substituting $\tilde q$ yields
$$
  \|Q^j \ast f\|_{L^q(\R^N)}
  \leq C 2^{j( (1-N)/2 + N/q +\theta N(1- 1/{\tilde p}))}\|f\|_{L^p(\R^N)}
  \qquad (j\in\Z)
$$
whenever $\frac{1}{p}=\frac{\theta}{\tilde p}+\frac{(1-\theta)(N+3)}{2(N+1)}$ for some $\tilde p\in
[1,\infty],\theta\in [1-\frac{2}{q},1],q\geq 2$. Substituting now $\tilde p$ gives
$$
  \|Q^j \ast f\|_{L^q(\R^N)}
  \leq C 2^{j( (3N+1)/2(N+1) + N/q - N/p +\theta N(N-1)/2(N+1))}\|f\|_{L^p(\R^N)}
  \qquad (j\in\Z)
$$
for $1\geq \theta\geq \max\{1-\frac{2}{q},\frac{2(N+1)-p(N+3)}{(N-1)p}\}$ for $q\geq 2$ and $1\leq p\leq
2(N+1)/(N+3)$. Summing up these estimates we get
\begin{align} \label{RT3} 
  \|G_1\ast f\|_{L^q(\R^N )} 
  \leq C \|f\|_{L^p(\R^N)} \quad\text{provided } 
  1\leq p\leq \frac{2N(N+1)}{N^2+4N-1},\;\frac{2N}{N-1}<q\leq \infty.
\end{align} 
By duality, the $(L^p,L^q)$-estimate provides the corresponding $(L^{q'},L^{p'})$-estimate that reads
\begin{align} \label{RT4} 
  \|G_1\ast f\|_{L^q(\R^N )} 
  \leq C \|f\|_{L^p(\R^N)} \quad\text{provided }  
  1\leq p< \frac{2N}{N+1},\;\frac{2N(N+1)}{(N-1)^2}\leq q\leq \infty.
\end{align} 
By \eqref{RT3},\eqref{RT4} the estimates for $G_1\ast f$ hold for all $p,q$ as in \eqref{condgutie} under
the additional assumption $1\leq p\leq \frac{2N(N+1)}{N^2+4N-1}$ or $\frac{2N(N+1)}{(N-1)^2}\leq q\leq\infty$. For all other exponents
$p,q$ as in \eqref{condgutie} except on the line $\frac{1}{p}-\frac{1}{q}=\frac{2}{N+1}$ we may choose 
\begin{equation}\label{RT6}
  p_1\in \Big[1,\frac{2N(N+1)}{N^2+4N-1}\Big],\;q_1\in \Big(\frac{2N}{N-1},\infty\Big],\qquad 
  p_2\in \Big[1,\frac{2N}{N+1}\Big),\;q_1 \in \Big[\frac{2N(N+1)}{(N-1)^2},\infty\Big]
\end{equation} 
such that
\begin{equation}\label{RT7}
   \frac{1}{p_1}-\frac{1}{q_1} = \frac{1}{p_2}-\frac{1}{q_2} = \frac{1}{p}-\frac{1}{q}\in
   \Big(\frac{2}{N+1},1\Big].
 \end{equation}
 Then, by \eqref{RT3},\eqref{RT4} the operator $G_1\ast f$ is bounded from $L^{p_j}(\R^N)$ to $L^{q_j}(\R^N)$
 for $j=1,2$ and we have $p_1<p<p_2$. This follows from \eqref{RT6},\eqref{RT7} since we have assumed $p$
 to satisfy neither  $1\leq p\leq \frac{2N(N+1)}{N^2+4N-1}$ nor $\frac{2N(N+1)}{(N-1)^2}\leq q\leq\infty$.
 In particular, by~\eqref{RT7} we can find $\theta\in (0,1)$ such that
 $\frac{1}{p}=\frac{\theta}{p_1}+\frac{1-\theta}{p_2}$ and hence
 $\frac{1}{q}=\frac{\theta}{q_1}+\frac{1-\theta}{q_2}$. So the Riesz-Thorin Theorem  finally yields
\begin{equation} \label{RT5}
  \|G_1\ast f\|_{L^q(\R^N)} 
  \leq C \|f\|_{L^p(\R^N)} \quad
  \text{if}\quad
  \frac{2}{N+1}<\frac{1}{p}-\frac{1}{q}\leq 1,\;
  \frac{1}{p}>\frac{N+1}{2N},\;\frac{1}{q}<\frac{N-1}{2N}. 
\end{equation}
Hence, the assertion of the theorem for $1/q-1/p>2/(N+1)$ follows from \eqref{RT0}
and~\eqref{RT5}.

\medskip

We finally consider the missing limiting case $1/q-1/p=2/(N+1)$, which is
established using Lorentz space interpolation following the ideas of Guti\'{e}rrez, see p.20 in~\cite{Gu}. As
in section~5.3 of \cite{SteinWeiss} we denote by $\|\cdot\|_{p,s}$ the standard norm of the Lorentz space $L^{p,s}(\R^N)$ so that the problem
reduces to proving
\begin{equation} \label{lorentze1}
  \|G_1 \ast f\|_{q ,\infty}\leq C\|f\|_{p,1},
\end{equation}
for $(p,q)=(p_0,q_0)$ and for $(p,q)=(q_0^\prime , p_0^\prime )$ where $q_0 = 2N/(N-1)$ and $p_0= 2N(N+1)/(N^2
+4N -1)$.
So let $E\subset\R^N$ be any measurable set of finite measure and for any given $\lambda>0$ we define
$A:=\{x\in \R^N : |(G_1 \ast 1_E) (x) |>\lambda \}$. By Theorem~3.13 in section~5 in~\cite{SteinWeiss} we see that
\eqref{lorentze1} is equivalent to
\begin{equation} \label{lorentze2}
  \lambda |A|^{1/q}\leq C |E|^{1/p}\quad\text{whenever }\lambda>0.
\end{equation}
 In view of the definition of $Q^j$ from~\eqref{eq:def_Qj} and the estimates \eqref{RT2},\eqref{RT1}, we have
 for all $M\in\Z$
\begin{align*}
  \lambda|A|
  &\leq \int_A |(G_1 \ast 1_E ) (x)| \,dx \\ 
  &\leq \sum_{j=0}^\infty \int_A |(Q^j \ast 1_E )(x)| \,dx \\
  &\leq \sum_{j=0}^\infty  \big( \|Q^j \ast 1_E \|_{L^2 (\R^N)} |A|^{1/2} 1_{j\leq M}
  + \|Q^j \ast 1_E\|_{L^\infty (\R^N)}  |A| 1_{j\geq M+1} \big) \\
  &\leq \sum_{j=0}^\infty  \big( 2^{j/2} |E|^{\frac{N+3}{2(N+1)}} |A|^{1/2} 1_{j\leq M}
  + 2^{j(1-N)/2} |E| |A| 1_{j\geq M+1} \big) \\
  &\leq |E|^{\frac{N+3}{2(N+1)}} |A|^{1/2}\cdot \sum_{j=-\infty}^M 2^{j/2}  
  + |E| |A| \cdot \sum_{j=M+1}^\infty 2^{j(1-N)/2}  \\
  &\leq C\left( 2^{M/2} |E|^{\frac{N+3}{2(N+1)}} |A|^{1/2}+ 2^{(M+1)(1-N)/2}|E||A|\right).
\end{align*}
Choosing $M\in\Z$ such that 
$$
  |E|^{\frac{N-1}{N(N+1)}}|A|^{1/N}\leq 2^M < 2 |E|^{\frac{N-1}{N(N+1)}}|A|^{1/N}
$$
we deduce \eqref{lorentze2} for $(p,q)=(p_0,q_0)$.   

\medskip
 
Finally, we consider the case $(p,q)=(q_0^\prime , p_0^\prime)$. Using 
the dual version of the inequality~\eqref{RT2} we get similarly as above
\begin{align*}
  \lambda|A|
  &\leq \sum_{j=0}^\infty \|Q^j \ast 1_E \|_{L^{\frac{2(N+1)}{N-1}}(\R^N )}
 |A|^{\frac{N+3}{2(N+1)}}  1_{j\leq M} +\sum_{j=0}^\infty \|Q^j \ast 1_E\|_{L^\infty (\R^N )} |A| 1_{j\geq
 M+1}\\
  & \leq C \big( 2^{M/2} |E|^{1/2} |A|^{\frac{N+3}{2(N+1)}}+ 2^{-(M+1) (N-1) /2}|E||A| \big) \\
  &\leq C |E|^{\frac{N+1}{2N}} |A|^{\frac{N^2+4N-1}{2N(N+1)}}
\end{align*}
where $M\in\Z$ was chosen according to
$$
  |E|^{\frac{1}{N}} |A|^{\frac{N-1}{N(N+1)}}\leq 2^M < 2 |E|^{\frac{1}{N}} |A|^{\frac{N-1}{N(N+1)}}.
$$
So we obtain \eqref{lorentze2} for $(p,q)=(q_0',p_0')$, which finishes the proof of
Theorem~\ref{thminvopbound} under assumption $a_1>0\geq a_2$. 

\medskip

It remains to discuss the modifications for the case $a_1>a_2>0$. One defines the functions $G_1,G_2$ as
above, but with a function $\psi\in\mathcal{S}(\R^N)$ satisfying \eqref{eq:def_psi} both for $a_1$ and for
$a_2$, which is possible due to $a_1>a_2>0$. Accordingly, the function $\varphi\in\mathcal S(\R^N)$ has to be
chosen such that \eqref{eq:def_varphi} holds both for $a_1$ and for $a_2$. Arguing as above yields the
resolvent estimates.

\end{proof} 

\begin{rmq} \label{rmqradial} 
  We show how our results may be improved in the radial setting where G. Ev\'equoz \cite{Evrad} recently
  announced the following inequality
  \begin{equation}\label{eq:restriction_conjecture}
    \|\hat f\|_{L^\infty_{rad}(S^{N-1})}\leq C\|f\|_{L^r_{rad}(\R^N)}\quad\text{for } 1\leq r<\frac{2N}{N+1}
  \end{equation}
  with a constant depending on $r$. Using this estimate in \eqref{RT2} instead of the Stein-Tomas Theorem,
  we get $\|Q^j \ast f\|_{L^2_{rad} (\R^N )}^2\leq C 2^j \|f\|_{L^r_{rad}(\R^N )}^2$  for such $r$.
  Interpolating this estimate with \eqref{RT1} for $\tilde{q}=\infty,\tilde p=1$, we find  
  for all $\theta\in [0,1],r\in [1,\frac{2N}{N+1})$ the inequality
  $$
    \|Q_j\ast f\|_{L^q_{rad}(\R^N)}
    \leq C 2^{j(\theta/2+(1-\theta)(1-N)/2)} \|f\|_{L^p_{rad}(\R^N)}
    \quad\text{if }
    \frac{1}{q}=\frac{\theta}{2}+\frac{1-\theta}{\infty},\frac{1}{p}=\frac{\theta}{r}+\frac{1-\theta}{1}. 
  $$
  So one finds $\theta=2/q$ and thus
  \begin{equation} \label{RT5bis}
    \|Q_j\ast f\|_{L^q_{rad}(\R^N)} 
    \leq C 2^{j/2( 1-N+2N/q)}  \|f\|_{L^p_{rad}(\R^N)} \quad
    \text{if } 
    \frac{N}{N-1}(1-\frac{1}{p})<\frac{1}{q}\leq 2,\;
    \frac{N+1}{2N}<\frac{1}{p}\leq 1. 
  \end{equation}
  Summing over $j\in\N_0$ one obtains
  \begin{equation}\label{RT5bisbis}
    \|G_1\ast f\|_{L^q_{rad}(\R^N)}   
    \leq C \|f\|_{L^p_{rad}(\R^N)}
    \quad\text{if }
    \frac{N}{N-1}(1-\frac{1}{p})<\frac{1}{q}<\frac{N-1}{2N},\;
    \frac{N+1}{2N}<\frac{1}{p}\leq 1. 
  \end{equation}
  Interpolating between this inequality and its dual, we find
  $$ 
    \|G_1\ast f\|_{L^q_{rad}(\R^N)} 
    \leq C \|f\|_{L^p_{rad}(\R^N)}
    \quad\text{if }  1\geq  \frac{1}{p}>\frac{N+1}{2N},\; 
      \frac{1}{q}< \frac{N-1}{2N},\; \frac{1}{p}-\frac{1}{q}>\frac{3N-1}{2N^2}.
  $$ 
  Indeed, for any given such pair $(p,q)$ one may choose $Q\in (2N/(N-1),q)$ such that
  $1/p-1/q > 1-(2N-1)/(QN)>(3N-1)/(2N^2)$ and then define $P$ via 
  $1/p-1/q=:1/P-1/Q$. Then the couple $(P,Q)$ satisfies \eqref{RT5bisbis} and interpolating the $(L^P,L^Q)$
  with the weight $\theta:=(\frac{1}{p}+\frac{1}{Q}-1)/(\frac{1}{P}+\frac{1}{Q}-1) \in [0,1]$ and the
  dual $(L^{Q'},L^{P'})$-estimate with the weight $1-\theta$ gives the desired estimate. In particular, the
  assumption $\frac{1}{p}-\frac{1}{q}>\frac{2}{N+1}$ from~\eqref{condgutie} may be replaced by the weaker
  assumption $\frac{1}{p}-\frac{1}{q}>\frac{3N-1}{2N^2}$ and our existence result from Theorem~\ref{mainthm}
  extends to all exponents $p\in (\frac{2N}{N-1},\frac{2N}{N-4})$ when $\Gamma$ is a positive constant (so
  that the radial setting is meaningful).
\end{rmq}

\section{Existence of solutions via dual variational methods}

In this section we prove the existence of a nontrivial solution to 
$$
  \Delta^2 u -\beta \Delta u  +\alpha u = \Gamma |u|^{p-2}u\quad\text{in }\R^N.
$$
We proceed
along the lines of Theorem~1.1 in \cite{EW} and Theorem~1.3 in \cite{Ev}. Since the proofs are very
similar, we keep the presentation short and refer to the corresponding results in \cite{Ev,EW} when
necessary. Adopting a dual variational approach we look for a function $v:=\Gamma^{1/p^\prime} |u|^{p-2}u\in L^{p^\prime}(\R^N)$
satisfying the following integral equation
\begin{equation} \label{eq:dual_equation}
 \Gamma^{1/p} \bold  R ( \Gamma^{1/p} v ) = |v|^{p^\prime -2}v\quad\text{in }\R^N
\end{equation} 
whenever $\frac{2(N+1)}{N+3}<p<\frac{2N}{(N-4)_+}$. Recall that $\bold R$ is the real part of the
complex resolvent $\mathfrak R$ of the fourth order linear operator appearing in the equation,
see~\eqref{eq:def_resolvent}.
Notice that equation~\eqref{eq:dual_equation} is the Euler-Lagrange equation associated with the functional
$J\in C^1 (L^{p^\prime}(\R^N),\R)$ given by
\begin{equation} \label{eq:def_J}
  J(v) := \frac{1}{p^\prime} \int_{\R^N}  |v|^{p^\prime} \,dx - \frac{1}{2} \int_{\R^N}   \Gamma^{1/p} v \bold{R}(  \Gamma^{1/p}  v)
  \,dx.
\end{equation} 
This is a consequence of the following result:

\begin{prop} \label{lemcompact}
  Assume (A1),(A2). Then $\bold{R}:L^{p^\prime}(\R^N)\rightarrow L^p(\R^N )$ satisfies 
  $$
    \int_{\R^N} w \bold{R} (v)\,dx = \int_{\R^N} v \bold{R}(w)\,dx 
    \quad\text{for all }v,w\in L^{p^\prime}(\R^N).
  $$
  Moreover, 
  for any bounded and measurable set $B\subset\R^N$ the operator $1_B \bold{R}:L^{p^\prime}(\R^N )\rightarrow
  L^p(\R^N)$ is compact.
\end{prop}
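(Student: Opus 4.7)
The proof splits into the symmetry relation, which I would obtain by a density argument from the Schwartz identity \eqref{eq:resolvent_symmetry}, and the compactness statement, which I would obtain from interior elliptic regularity for the fourth order operator $L = \Delta^2 - \beta\Delta + \alpha$ combined with Rellich--Kondrachov in a Sobolev regime matching precisely the upper bound on $p$ in (A2).

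For the symmetry, I fix $v, w \in L^{p'}(\R^N)$ and approximate them by Schwartz functions $v_n \to v$, $w_n \to w$ in $L^{p'}(\R^N)$. Under (A2) the pair $(p', p)$ lies in the admissible range of Theorem~\ref{thminvopbound}, so $\bold{R}(v_n) \to \bold{R}(v)$ and $\bold{R}(w_n) \to \bold{R}(w)$ strongly in $L^p(\R^N)$. Taking the real part of the Schwartz identity \eqref{eq:resolvent_symmetry} and letting $n \to \infty$ via H\"older's inequality in the conjugate pairing $L^{p'} \times L^p$ yields the claim.

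For the compactness statement, let $f_n \rightharpoonup f$ weakly in $L^{p'}(\R^N)$ and set $u_n := \bold{R}(f_n)$. Testing against an arbitrary $g \in L^{p'}(\R^N)$ and using the symmetry just obtained,
$$
 \int_{\R^N} g\, u_n\,dx = \int_{\R^N} f_n\, \bold{R}(g)\,dx \;\longrightarrow\; \int_{\R^N} f\, \bold{R}(g)\,dx = \int_{\R^N} g\, \bold{R}(f)\,dx,
$$
so $u_n \rightharpoonup \bold{R}(f)$ weakly in $L^p(\R^N)$. By the remark preceding the proposition, each $u_n$ solves $\Delta^2 u_n - \beta \Delta u_n + \alpha u_n = f_n$ in the distributional sense, and since $p \geq 2 \geq p'$ across the range (A2), the sequence $(u_n)$ is uniformly bounded in $L^{p'}_{loc}(\R^N)$. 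Because $\Delta^2$ is elliptic of order four, interior Calder\'on--Zygmund estimates for constant-coefficient elliptic operators give, for any pair of bounded open sets $B \Subset \Omega' \Subset \Omega$ and uniformly in $n$,
$$
 \|u_n\|_{W^{4, p'}(\Omega')} \leq C\bigl( \|u_n\|_{L^{p'}(\Omega)} + \|f_n\|_{L^{p'}(\Omega)} \bigr) \leq C'.
$$

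Rellich--Kondrachov together with the Sobolev embedding then deliver the required strong compactness. In the range $p < 2N/(N-4)_+$ imposed by (A2) the embedding $W^{4,p'}(\Omega') \hookrightarrow L^p(\Omega')$ is compact: immediately so for $N \in \{2,3,4\}$ and for $4p' \geq N$ when $N \geq 5$, whereas for $4p' < N$ the required inequality $p < Np'/(N-4p')$ is algebraically equivalent to $p < 2N/(N-4)$. Extracting a subsequence we obtain $u_n \to u^*$ strongly in $L^p(\Omega')$; uniqueness of weak limits in $L^p$ forces $u^* = \bold{R}(f)$ on $\Omega'$, and multiplication by $1_B$ gives $1_B \bold{R}(f_n) \to 1_B \bold{R}(f)$ in $L^p(\R^N)$. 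A standard double-subsequence argument upgrades this to convergence of the full sequence. The step I expect to require the most care is the precise interior $W^{4,p'}$ elliptic estimate with $L^{p'}$ inhomogeneity, together with the observation that the endpoint $p < 2N/(N-4)_+$ in (A2) coincides exactly with the compactness threshold for $W^{4,p'} \hookrightarrow L^p$ on bounded domains -- a pleasant match confirming that (A2) is intimately tied to the compactness mechanism underlying the dual variational approach.
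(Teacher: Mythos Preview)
Your proposal is correct and follows essentially the same route as the paper: symmetry via density from the Schwartz identity~\eqref{eq:resolvent_symmetry} and Theorem~\ref{thminvopbound}, and compactness via the interior $W^{4,p'}$ estimate (which the paper packages as Proposition~\ref{propreg}) combined with the compact embedding $W^{4,p'}(B_R)\hookrightarrow L^p(B_R)$ in the range $p<2N/(N-4)_+$. The only cosmetic differences are that the paper reduces to $v_n\rightharpoonup 0$ by linearity and cites the forward reference Proposition~\ref{propreg} rather than invoking Calder\'on--Zygmund theory directly.
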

\begin{proof}
  We argue as in the proof of \cite{EW}[Lemma~4.1]. The selfdual mapping properties of $\bold{R}$ result
  from Theorem~\ref{thminvopbound} and its symmetry follows from~\eqref{eq:resolvent_symmetry}.
  In order to prove the compactness property let $(v_n)\subset L^{p^\prime}(\R^N)$ satisfy $v_n
  \rightharpoonup 0$ in $L^{p^\prime}(\R^N)$ as $n\rightarrow \infty$. The boundedness and symmetry of
  $\bold R$ yield $\bold{R}(v_n)\rightharpoonup 0$ in $L^p(\R^N)$ as $n\rightarrow \infty$.
  On the other hand, we will show in Proposition~\ref{propreg} that for all $R>0$ we have
  $$
    \|\bold{R}(v_n) \|_{W^{4,p^\prime}(B_R)}
    \leq C_R \big( \|\bold{R} (v_n) \|_{L^{p^\prime} (\R^N)} +\|v_n \|_{L^{p^\prime} (\R^N)} \big) 
    \leq C_R.
  $$ 
  Using the compactness of the embedding $W^{4,p^\prime}(B_R) \hookrightarrow L^p(B_R)$ we find a subsequence
  denoted again by $(v_n)$ such that $\bold{R}(v_n)$ converges in $L^p(B_R)$
  towards its weak limit, which is 0 as we proved above. Since $R>0$ was arbitrary, this finishes the
  proof.
\end{proof}

The following result shows that $J$ has the mountain pass geometry and that it admits a bounded
Palais-Smale sequence at its mountain pass level which, as usual, is defined as
follows: 
\begin{equation}\label{eq:MP_level}
  c = \inf_{\gamma \in P} \max_{t\in [0,1]}J(\gamma (t)). 
\end{equation}
Here, $P= \{\gamma \in C([0,1], L^{p^\prime}(\R^N)):\ \gamma (0)=0 \text{ and } J(\gamma (1))<0
\}$.

\begin{lem} \label{PSbounded}
  Assume (A1),(A2). Then we have:
\begin{itemize}
  \itemsep-2pt 
\item[(i)] There exist $\delta >0,\rho\in (0,1)$ such that $J(v) \geq \delta >0$ for all $v\in
L^{p^\prime}(\R^N )$ with $\|v\|_{L^{p^\prime}(\R^N)}=\rho$.
\item[(ii)] There exists $v_0 \in L^{p^\prime}(\R^N)$ such that $\|v_0\|_{L^{p^\prime}(\R^N)}>1$ and $J(v_0)
<0$.
\item[(iii)] There exists a bounded Palais-Smale sequence $(u_n) \subset L^{p^\prime}(\R^N)$ for $J$ at the
level $c>0$.
\end{itemize}
\end{lem}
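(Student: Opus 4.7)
The plan is to prove the three assertions by standard mountain-pass bookkeeping applied to $J$, exploiting two structural features: the symmetry of $\bold{R}$ (Proposition~\ref{lemcompact}) and the fact that $p'<2$ (since $p>2$).

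For (i), I would restrict $J$ to a ball of small radius $\rho\in(0,1)$ in $L^{p'}(\R^N)$. H\"older's inequality, $\|\Gamma\|_\infty<\infty$ and the resolvent estimate of Theorem~\ref{thminvopbound} give
$$
  \left|\int_{\R^N}\Gamma^{1/p}v\,\bold{R}(\Gamma^{1/p}v)\,dx\right|
  \leq \|\Gamma^{1/p}v\|_{L^{p'}(\R^N)}\|\bold{R}(\Gamma^{1/p}v)\|_{L^p(\R^N)}
  \leq C\|v\|_{L^{p'}(\R^N)}^2,
$$
so that $J(v)\geq \frac{1}{p'}\rho^{p'}-\frac{C}{2}\rho^2$ whenever $\|v\|_{L^{p'}(\R^N)}=\rho$. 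Since $p'<2$, the first term dominates for small $\rho$, yielding $J(v)\geq \delta>0$ on a suitably small sphere.

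For (ii), the idea is to first produce a real-valued $w\in\mathcal{S}(\R^N)$ with $\int w\,\bold{R}w\,dx>0$ and then set $v_0:=t\Gamma^{-1/p}w$ for $t$ large. To construct such a $w$, pick any real-valued Schwartz function whose Fourier transform is compactly supported in $\{\xi\in\R^N:|\xi|^2>a_1\}$; then $\hat w$ is bounded away from both resonant spheres $\{|\xi|^2=a_1\}$ and $\{|\xi|^2=a_2\}$ (the latter being relevant only when $a_2>0$). Dominated convergence lets one pass to the limit in~\eqref{eq:def_resolvent} without invoking any principal value, and Plancherel combined with the identity $\sqrt{\beta^2-4\alpha}=a_1-a_2$ yields
$$
  \int_{\R^N} w\,\bold{R}w\,dx
  = \int_{\R^N}\frac{|\hat w(\xi)|^2}{(|\xi|^2-a_1)(|\xi|^2-a_2)}\,d\xi.
$$
On $\supp(\hat w)$ the denominator is strictly positive in each of the cases (a), (b), (c) of (A1), so this integral is positive for $w\neq 0$. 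Since $\inf\Gamma>0$, $v_0\in L^{p'}(\R^N)$ and
$$
  J(v_0)=\frac{t^{p'}}{p'}\|\Gamma^{-1/p}w\|_{L^{p'}(\R^N)}^{p'}-\frac{t^2}{2}\int_{\R^N} w\,\bold{R}w\,dx,
$$
which tends to $-\infty$ as $t\to\infty$ because $p'<2$. For $t$ large this also ensures $\|v_0\|_{L^{p'}(\R^N)}>1$.

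For (iii), parts (i) and (ii) give $J$ the geometry required by the Ambrosetti-Rabinowitz mountain-pass theorem (without any Palais-Smale condition), producing a sequence $(v_n)\subset L^{p'}(\R^N)$ with $J(v_n)\to c>0$ and $J'(v_n)\to 0$ in $(L^{p'}(\R^N))^*$. The symmetry of $\bold{R}$ yields $J'(v)[\phi]=\int|v|^{p'-2}v\phi\,dx-\int\Gamma^{1/p}\phi\,\bold{R}(\Gamma^{1/p}v)\,dx$, and a direct computation gives
$$
  \Bigl(\frac{1}{p'}-\frac{1}{2}\Bigr)\|v_n\|_{L^{p'}(\R^N)}^{p'}
  = J(v_n)-\tfrac{1}{2}J'(v_n)[v_n]
  \leq C+o(1)\,\|v_n\|_{L^{p'}(\R^N)}.
$$
Since $1<p'<2$, the coefficient on the left is positive and the power on $\|v_n\|_{L^{p'}(\R^N)}$ is strictly above $1$, forcing $(v_n)$ to be bounded. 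The main subtlety lies in step (ii): one must justify the Fourier identity rigorously, since $\mathfrak{R}$ is defined via a limiting absorption process rather than as a bounded Fourier multiplier on $L^{p'}(\R^N)$. Choosing $\hat w$ supported strictly away from $\{|\xi|^2\in\{a_1,a_2\}\}$ makes the $\varepsilon\to 0^+$ limit trivial and reduces the computation to a routine Plancherel argument.
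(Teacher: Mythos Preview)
Your proof is correct and follows essentially the same approach as the paper, which mostly delegates the details to the references~\cite{EW,Ev,MMP}. In part~(ii) you are actually a bit more careful than the paper's write-up: by taking $v_0=t\Gamma^{-1/p}w$ rather than $v_0=tz$ you ensure that the quadratic part of $J(v_0)$ is exactly $\frac{t^2}{2}\int w\,\bold{R}w\,dx$, so that the Fourier computation with $\supp(\hat w)\subset\{|\xi|^2>a_1\}$ applies directly; the paper's choice $v_0=tz$ leaves the $\Gamma^{1/p}$ weight inside the bilinear form, which is a harmless but unaddressed detail there.
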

\begin{proof}  
  The part (i) is proved exactly as in Lemma~4.2 in~\cite{EW}, see also p.9 in~\cite{Ev}. For the part (ii) we
  argue as in Lemma~3.1 in~\cite{MMP}. We find a $z\in L^{p^\prime}(\R^N)$ such that 
  $$
    \int_{\R^N}  z \bold R z \,dx >0,
  $$ 
  so that $v_0=tz$ for $t$ sufficiently large is a valid choice by~\eqref{eq:def_J}. Indeed, using the
  characterization~\eqref{eq:def_resolvent}, we choose $z\in\mathcal{S}(\R^N)$ such that
  $|\xi|^2>a_1$ for all $\xi\in \supp(\hat z)$ so that $a_1>a_2$ gives
  \begin{align*}
    \int_{\R^N} z \bold R z \,dx
    = \int_{\supp(\hat z)}  \frac{|\hat z(\xi)|^2}{(|\xi|^2-a_1)(|\xi|^2-a_2)} \,d\xi
    > 0.
  \end{align*}
  Part~(iii) is based on the deformation lemma and the proof is the same as the one of Lemma~4.2~(iii) and
  Lemma~6.1 in~\cite{EW}.  
\end{proof}
 
Next we need a compactness property for Palais-Smale sequences obtained in part (iii) of the previous lemma.
This will be achieved by establishing the ''nonvanishing property'' in the spirit of Theorem~3.1 in \cite{EW} or
Theorem~3.1 in \cite{Ev}.
  
\begin{lem} \label{nonvanishing}
  Assume (A1),(A2) and let $(u_n) \subset L^{p^\prime}(\R^N) $ be
  a bounded sequence such that 
  $$
  	\limsup_{n\rightarrow \infty} \Big|\int_{\R^N} u_n \bold{R} u_n \,dx\Big|>0.
  $$ 
  Then there exist $R,\zeta>0$ and a sequence $(x_n) \subset \R^N$ such that we have 
  \begin{equation*}
	\int_{B_R(x_n)} |u_n|^{p^\prime}\,dx \geq \zeta \quad\text{for infinitely many }n\in\N.
  \end{equation*}
\end{lem}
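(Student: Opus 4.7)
I will argue by contraposition. Assume that the conclusion fails, i.e.\ for every $R>0$,
\[
  \lim_{n\to\infty} \sup_{y\in\R^N} \int_{B_R(y)} |u_n|^{p'}\,dx = 0.
\]
The plan is to deduce from this vanishing hypothesis that $\int_{\R^N} u_n \bold R u_n\,dx \to 0$, which contradicts the assumption $\limsup_n |\int u_n \bold R u_n\,dx|>0$. I follow the strategy of Theorem~3.1 in~\cite{EW} (see also Theorem~3.1 in~\cite{Ev}), splitting $\bold R$ into a non-resonant and a resonant piece according to the kernel decomposition used in the proof of Theorem~\ref{thminvopbound}: write $G=G_1+G_2$ and set $\bold R_j u := \Real(G_j)\ast u$, so that $\bold R = \bold R_1+\bold R_2$.

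\textbf{Non-resonant part.} By the pointwise bounds \eqref{estG2}, \eqref{estG2a_2=0}, we have $\Real(G_2)\in L^r(\R^N)$ for every $r$ in a nontrivial interval containing some $r$ such that $1+1/p=1/r+1/p'$. Truncate the kernel at radius $\rho>0$ and write $\bold R_2 = \bold R_2^{\leq\rho}+T_\rho$, where $T_\rho$ is convolution with $\Real(G_2)1_{\{|x|>\rho\}}$. Young's inequality together with $\Real(G_2)\in L^r$ gives $\|T_\rho\|_{L^{p'}\to L^p}\leq \|G_2 1_{\{|x|>\rho\}}\|_{L^r} \to 0$ as $\rho\to\infty$; hence the contribution of $T_\rho$ to $\int u_n \bold R_2 u_n\,dx$ is bounded by $\varepsilon \|u_n\|_{p'}^2$ for $\rho$ large. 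For the local operator $\bold R_2^{\leq\rho}$, whose kernel has support in $B_\rho$, cover $\R^N$ by balls $B_\rho(y_k)$ with bounded overlap: only pairs $(k,\ell)$ with $|y_k-y_\ell|\leq 3\rho$ contribute, and each contribution is estimated via Hölder by $C\|u_n\|_{L^{p'}(B_{2\rho}(y_k))}\|u_n\|_{L^{p'}(B_{2\rho}(y_\ell))}$. Using the vanishing hypothesis on one of the two local norms (and boundedness in $L^{p'}$ of the other), this sum tends to zero as $n\to\infty$ for fixed $\rho$. Letting $n\to\infty$ and then $\rho\to\infty$ gives $\int u_n \bold R_2 u_n\,dx\to 0$.

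\textbf{Resonant part.} Use the dyadic decomposition \eqref{eq:def_G1j} together with the auxiliary function $\varphi$ from \eqref{eq:def_varphi}, writing $\bold R_1 u = \sum_{j\geq 0}(Q^j\ast u)$ with $Q^j=G_1^j\ast\varphi$. Because $p>\frac{2(N+1)}{N-1}$ strictly, the pair $(p',p)$ lies strictly inside the Stein--Tomas region underlying the proof of Theorem~\ref{thminvopbound}; interpolation (Riesz--Thorin) between the Stein--Tomas estimate \eqref{RT2} and the Young estimate \eqref{RT1} then yields, for some $\varepsilon>0$,
\[
  \|Q^j\ast u_n\|_{L^p(\R^N)} \leq C\, 2^{-\varepsilon j}\,\|u_n\|_{L^{p'}(\R^N)}.
\]
Hence for every $\eta>0$ one can pick $M$ so large that the tail $\sum_{j>M}\int u_n (Q^j\ast u_n)\,dx$ is bounded by $\eta$ uniformly in $n$. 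For the remaining finite sum $\sum_{j\leq M}$, each $Q^j$ is a Schwartz function whose spatial essential support lies in $\{|x|\lesssim 2^M\}$; applying the same cover-and-Hölder argument as in the non-resonant step, together with vanishing, forces each of these finitely many contributions to go to zero as $n\to\infty$. Letting $n\to\infty$ and then $\eta\to 0$ yields $\int u_n \bold R_1 u_n\,dx \to 0$.

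Combining the two steps contradicts the hypothesis $\limsup_n |\int u_n \bold R u_n\,dx|>0$ and proves the lemma. The main obstacle is the resonant part: the kernel $G_1$ decays only like $|x|^{(1-N)/2}$ and is not integrable, so one cannot simply use Young's inequality. The dyadic/Stein--Tomas machinery from Theorem~\ref{thminvopbound}, combined with the strict inequality $p>\frac{2(N+1)}{N-1}$, is precisely what makes the geometric series in $j$ convergent and allows the decoupling into a controllable tail plus a finite local sum.
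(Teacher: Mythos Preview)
Your overall strategy coincides with the paper's (both following \cite{EW,Ev}): argue by contraposition, split $G=G_1+G_2$, treat the non-resonant $G_2$ via Young's inequality plus a covering argument, and treat the resonant $G_1$ by separating a decaying tail from a local head. The paper packages the tail estimate as $\|(1_{M_R}G_1)\ast f\|_{L^p}\leq C R^{-(N-1)/2+(N+1)/p}\|f\|_{L^{p'}}$ for $f$ with Fourier support near the sphere (quoting Proposition~3.3 and Lemma~3.4 of~\cite{EW}); your dyadic $2^{-\varepsilon j}$ version is the discrete analogue.

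There is, however, a real gap in your justification of the decay $\|Q^j\ast u_n\|_{L^p}\leq C\,2^{-\varepsilon j}\|u_n\|_{L^{p'}}$. Interpolating only \eqref{RT2} and \eqref{RT1} on the self-dual line $(p',p)$ yields at best the exponent $\frac{1-N}{2}+\frac{N(N+3)}{p(N+1)}$, which is still \emph{positive} for $p\in\big(\frac{2(N+1)}{N-1},\frac{2N(N+3)}{N^2-1}\big)$; e.g.\ for $N=3$ this misses all $p\in(4,\tfrac{9}{2})$. One must also bring in the \emph{dual} of \eqref{RT2}, namely $\|Q^j\|_{L^2\to L^{2(N+1)/(N-1)}}\leq C\,2^{j/2}$; three-point Riesz--Thorin interpolation then gives the sharp exponent $\frac{1-N}{2}+\frac{N+1}{p}$, negative exactly when $p>\frac{2(N+1)}{N-1}$. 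A smaller point: $Q^j=G_1^j\ast\varphi$ is Schwartz but not compactly supported (only $G_1^j$ is), so the covering step for $\sum_{j\leq M}$ needs the same near/far truncation you already used for $G_2$, or equivalently one works with the compactly supported $G_1^j$ applied to $\varphi\ast u_n$ after checking that vanishing persists under convolution with $\varphi\in\mathcal S$.
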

\begin{proof}
We use the same notation as in the proof of Theorem~\ref{thminvopbound}. We recall from
\eqref{gutiee1},\eqref{estG2} and \eqref{estG2a_2=0}, that there exists a $C>0$ such that for all $x\in \R^N$ the fundamental solution
$G=G_1+G_2$ satisfies
\begin{equation}\label{estG2bis1}
  |G_1 (x)|\leq C (1+|x|)^{\frac{1-N}{2}},
\end{equation}
\begin{equation} 
\label{estG2bis2}
 |G_2 (x)|\leq \begin{cases} C \min \{|x|^{4-N}, |x|^{-N}\}&, \text{if }N> 4,\\ 
  C \min \{1+\log|x|, |x|^{-N}\}&,\text{if } N= 4,\\
 C \min \{1, |x|^{-N}\}&,\text{if } N= \{2,3\},\\ \end{cases} \qquad\text{when }\;a_2\neq 0,
\end{equation}
and
\begin{equation}
\label{estG2a_2=0bis}
|G_2 (x)|\leq \begin{cases} C \min \{|x|^{4-N},  |x|^{-N-1}\} &,\text{if}\ N> 4,\\ 
  C \min \{1+ |\log|x||, |x|^{-N-1}\} &,\text{if}\ N= 4,\\
C \min \{1,|x|^{-N-1} \}&, \text{if}\ N=3, \\
\end{cases}
  \qquad\text{when }\;a_2=0.
\end{equation}
 For a sequence $(u_n)$ as required we assume for contradiction that
\begin{equation} \label{nonvae1}
  \lim_{n\rightarrow \infty} \left(\sup_{y\in \R^N}\int_{B_\rho (y)} |u_n|^{p^\prime}\,dx  \right)=0
  \quad \text{for all }\rho>0. 
\end{equation}
From this we will deduce 
\begin{align} \label{nonvaeclaim}
  \int_{\R^N} u_n (G_1 \ast u_n )\,dx\to 0 \;\;\text{and}\;\;\int_{\R^N} u_n (G_2 \ast u_n )\,dx \rightarrow 0  
  \quad\text{as } n\rightarrow \infty,
\end{align}
leading to a contradiction to our assumption.

\medskip

Both claims are proved almost identically as in \cite{Ev,EW} so that we only provide the main steps. 
We first prove the second assertion. For $R>1$ define 
$D_R:= \R^N \backslash A_R$ for the annulus $A_R:=\{x\in \R^N :\ 1/R \leq |x|\leq R \}$.  
Thanks to \eqref{estG2bis2} and \eqref{estG2a_2=0bis} we have $\|G_2 \|_{L^{p/2}(D_R)}\to 0$ as $R\to\infty$ since
$(N-4)p<2N$ and $(N+1)p>2N$. So Young's convolution inequality implies
\begin{equation} \label{nonvae2}
  \sup_{n\in \N} \Big| \int_{\R^N}u_n \left[ (1_{D_R} G_2)\ast u_n \right] \,dx\Big|\leq
 \|G_2\|_{L^{p/2}(D_R)} \sup_{n\in \N}\|u_n\|_{L^{p^\prime}(\R^N)}^2 \rightarrow 0
  \text{ as }R\to \infty.
\end{equation}
On the other hand, in the case $N\geq 2,N\neq 4$ we may use the estimates from the bottom of p.706
in~\cite{EW} with $R^{N-2}$ replaced by $R^{N-4}$ and in the case $N= 4$ the estimates from p.11 in~\cite{Ev}
with $R^{4/p}(1+|\log(R)|)$ replaced by $R^{8/p}(1+|\log(R)|)$ to find
\begin{align}\label{nonvae3}
  \int_{\R^N} u_n  \left[(1_{A_R} G_2)\ast u_n \right] \,dx \to 0 \quad\text{as }n\to\infty\quad
  \text{for all }R>0.
\end{align}
Combining \eqref{nonvae2} and \eqref{nonvae3} we get 
$$
  \int_{\R^N} u_n (G_2 \ast u_n )\,dx \to 0 \quad\text{as }n\to\infty.
$$

\medskip

Next, we turn to the second claim. To this end we set $M_R:= \R^N \backslash B_R$. As in
Proposition~3.3 in \cite{EW} or Claim 2 on p.11 in \cite{Ev} one proves the inequality
\begin{equation*} 
  \|[1_{M_R} G_1]\ast f \|_{L^p (\R^N)} \leq C R^{-(N-1)/2 + (N+1)/p }\|f\|_{L^{p^\prime}(\R^N )}
\end{equation*}
whenever $f\in \mathcal{S} (\R^N)$ satisfies $\supp(\hat f) \subset \{\xi\in\R^N : ||\xi|- \sqrt a_1|\leq
\sqrt a_1/2 \}$.
Notice that $G_1$ satisfies, qualitatively, the same bounds as the function $\Phi_1$ in \cite{EW},
see~\eqref{estG2bis1} and the estimates~(26),(7) in~\cite{EW},\cite{Ev}, respectively. The proof of Lemma~3.4
in \cite{EW} and Claim~3 on p.12 in \cite{Ev} transfers literally to our situation proving 
$$
  \int_{\R^N} u_n (G_1 \ast u_n )\,dx \to 0 \quad\text{as }n\to\infty,
$$
which finishes to proof.
\end{proof}

With these preparations we can finally prove the existence of a nontrivial solution for~\eqref{4nls}.

\begin{thm} \label{thmex}
  Assume (A1),(A2). Then there exists a nontrivial critical point $v\in L^{p^\prime}(\R^N)$ of $J$ at the mountain pass level $c$ defined in
  ~\eqref{eq:MP_level}.
\end{thm}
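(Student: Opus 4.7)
The plan is to carry out the standard mountain pass/concentration-compactness scheme in the dual variational framework, using all the machinery built above. First, apply Lemma~\ref{PSbounded}(iii) to obtain a bounded Palais-Smale sequence $(v_n)\subset L^{p'}(\R^N)$ with $J(v_n)\to c$ and $J'(v_n)\to 0$. Since $p>2$ gives $\tfrac{1}{p'}-\tfrac{1}{2}>0$, the standard identity
\begin{equation*}
  J(v_n)-\tfrac{1}{2}\langle J'(v_n),v_n\rangle=\Bigl(\tfrac{1}{p'}-\tfrac{1}{2}\Bigr)\int_{\R^N}|v_n|^{p'}\,dx
\end{equation*}
forces $\|v_n\|_{L^{p'}}^{p'}\to c/(\tfrac{1}{p'}-\tfrac{1}{2})>0$, and subtracting from $\langle J'(v_n),v_n\rangle\to 0$ shows that $\int_{\R^N}\Gamma^{1/p}v_n\,\bold R(\Gamma^{1/p}v_n)\,dx$ converges to the same strictly positive limit.

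Next, I apply Lemma~\ref{nonvanishing} to the sequence $\Gamma^{1/p}v_n$, which is bounded in $L^{p'}(\R^N)$ because $\Gamma\in L^\infty$: up to a subsequence there exist $R,\zeta>0$ and $(y_n)\subset\R^N$ with $\int_{B_R(y_n)}|v_n|^{p'}\,dx\geq\zeta\|\Gamma\|_\infty^{-p'/p}=:\tilde\zeta>0$. Now I exploit the $\Z^N$-periodicity of $\Gamma$ to recentre: pick $k_n\in\Z^N$ with $|y_n-k_n|\leq\sqrt{N}/2$ and set $\tilde v_n:=v_n(\cdot+k_n)$. Since $\Gamma$ is $\Z^N$-periodic, $\bold R$ commutes with translations, and integer translations are $L^{p'}$-isometries, $(\tilde v_n)$ is still a bounded PS sequence at level $c$, and now the $L^{p'}$-mass is trapped in the fixed ball $B_{R+\sqrt{N}/2}(0)$.

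Extract a weakly convergent subsequence $\tilde v_n\rightharpoonup v$ in $L^{p'}(\R^N)$. To show $v\neq 0$ and $J'(v)=0$, I will use the local compactness established in Proposition~\ref{lemcompact}: for any ball $B\subset\R^N$ the operator $1_B\bold R:L^{p'}\to L^p$ is compact, hence $\bold R(\Gamma^{1/p}\tilde v_n)\to\bold R(\Gamma^{1/p}v)$ strongly in $L^p(B)$. Combined with $J'(\tilde v_n)=|\tilde v_n|^{p'-2}\tilde v_n-\Gamma^{1/p}\bold R(\Gamma^{1/p}\tilde v_n)\to 0$ in $L^p(\R^N)$, this forces strong convergence of $|\tilde v_n|^{p'-2}\tilde v_n$ in $L^p(B)$; inverting the pointwise homeomorphism $s\mapsto|s|^{p'-2}s$ almost everywhere along a further subsequence and invoking Vitali upgrades this to strong convergence $\tilde v_n\to v$ in $L^{p'}(B)$. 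Then $\int_B|v|^{p'}\,dx\geq\tilde\zeta$ gives $v\neq 0$, and passing to the limit in $\langle J'(\tilde v_n),\varphi\rangle\to 0$ for arbitrary $\varphi\in C_c^\infty(\R^N)$ yields $J'(v)=0$.

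The main obstacle is the second step above: lifting the non-vanishing information (which only controls a piece of $\tilde v_n$ on a fixed ball) into a genuine nontrivial critical point. This requires the strictly positive mountain pass level $c$ (to trigger the hypothesis of Lemma~\ref{nonvanishing}), the $\Z^N$-periodicity of $\Gamma$ (to recentre), and, crucially, the local compactness of $1_B\bold R$ from Proposition~\ref{lemcompact}, which in turn rests on the delicate Stein-Tomas-based decomposition $G=G_1+G_2$ of the fundamental solution developed in Section~3.
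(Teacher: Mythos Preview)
Your argument is correct and follows essentially the same route as the paper: bounded Palais--Smale sequence from Lemma~\ref{PSbounded}(iii), positivity of the quadratic part to trigger Lemma~\ref{nonvanishing}, translation to a fixed ball, and then local compactness of $1_B\bold R$ from Proposition~\ref{lemcompact} to pass to the limit. You are in fact a bit more careful than the paper on two points: you apply Lemma~\ref{nonvanishing} explicitly to $\Gamma^{1/p}v_n$ and then convert, and you replace the centres $y_n$ by nearby integer lattice points $k_n$ so that the $\Z^N$-periodicity of $\Gamma$ really gives $J'(\tilde v_n)\to 0$ (the paper writes $w_n(x)=v_n(x+x_n)$ with $x_n\in\R^N$ and tacitly uses this invariance).
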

\begin{proof}
  Let $(v_n) \subset L^{p^\prime}(\R^N)$ be a bounded Palais-Smale sequence for $J$ given by
  Lemma~\ref{PSbounded}~(iii).
  Then we have 
  $$
    \lim_{n\rightarrow \infty} \int_{\R^N}  \Gamma^{1/p}  v_n \bold{R} (  \Gamma^{1/p}  v_n) \,dx
    = \frac{2 p^\prime}{2-p^\prime} \lim_{n\rightarrow \infty} \left( J(v_n) - \frac{J^\prime (v_n) v_n}{p^\prime}\right)
    = \frac{2p^\prime }{2-p^\prime} c>0.
  $$ 
  Hence, Lemma~\ref{nonvanishing} implies that there exist $R,\zeta>0$ and a sequence $(x_n) \subset \R^N$
  such that, up to a subsequence and reindexing $v_n$, we have
  \begin{equation} \label{mainthme1}
	\int_{B_R (x_n)}|v_n|^{p^\prime} \,dx \geq \zeta\quad \text{for all } n\in\N.
  \end{equation}
  Then the sequence $(w_n)\subset L^{p^\prime}(\R^N)$ given by 
  $w_n(x):= v_n (x+x_n)$ is bounded with $J(w_n)\to c$ and $\|J^\prime (w_n)\|=\|J^\prime (v_n)\|\to 0$ as
  $n\to\infty$. For  $\phi\in L^{p^\prime}(\R^N)$ with $\supp(\phi)\subset
  \overline{B_R}$ and all $n,m\in\N$ we get
  \begin{align*}
    &\left| \int_{\R^N} (|w_n|^{p^\prime -2} w_n - |w_m|^{p^\prime -2}w_m ) \phi \,dx  \right| \\
   &= \left|J^\prime (w_n) \phi - J^\prime (w_m) \phi + \int_{B_R}  \bold{R}( \Gamma^{1/p} (w_n - w_m ))   \Gamma^{1/p} \phi\,dx  \right|\\
  &\leq (\|J^\prime (w_n)\| + \|J^\prime (w_m)\|) \|\phi \|_{L^{p^\prime}(\R^N )} 
   + \|  \Gamma^{1/p} 1_{B_R} \bold{R} ( \Gamma^{1/p} (w_n - w_m))\|_{L^p (\R^N)} \|\phi \|_{L^{p^\prime}(\R^N )}.
  \end{align*}
  Using  $\|J^\prime (w_n)\|=\|J^\prime (v_n)\|\to 0$ as well as the compactess of $1_{B_R}\bold R$ (see
  Lemma~\ref{lemcompact}), we obtain that a subsequence of $(|w_n|^{p^\prime -2}w_n)$ is a
  Cauchy sequence in $L^p(B_R)$. So there exists $w\in L^{p^\prime}(B_R)$ such that $|w_n|^{p^\prime -2}w_n
  \rightarrow |w|^{p^\prime -2}w$ strongly in $L^p(B_R)$. Moreover, we deduce from
  $w_n(x)=v_n(x+x_n)$ and~\eqref{mainthme1} that 
  $$
    \int_{B_R} |w|^{p^\prime} \,dx >0,
  $$ 
  which implies $w\neq 0$. Finally we observe that $w$ is a critical point of $J$ because we have for
  all $\phi \in C_0^\infty (\R^N)$ the identity
  \begin{align*}
    J^\prime (w) \phi
    &= \left(\int_{\R^N} |w|^{p^\prime -2} w \phi\,dx - \int_{\R^N}\bold{R}( \Gamma^{1/p} w)  \Gamma^{1/p}  \phi \,dx \right) \\
    &= \lim_{n\to \infty} \left(\int_{\R^N} |w_n|^{p^\prime -2} w_n \phi\,dx - \int_{\R^N} 
    \bold{R}(  \Gamma^{1/p}  w_n)   \Gamma^{1/p} \phi\,dx \right) \\
    &= \lim_{n\rightarrow \infty} J^\prime (w_n) \phi \\ 
    &=0,
  \end{align*} 
  where we used that $\bold{R}$ is a bounded linear operator and that $|w_n|^{p^\prime -2}w_n \rightarrow
  |w|^{p^\prime -2}w$ in $L^p_{loc}(\R^N)$. Therefore, $w\in L^p (\R^N)$ is a nontrivial critical point of
  $J$. 
\end{proof}

\section{Qualitative properties of solutions}\label{sec:qualitative}

In this section we investigate the regularity and the asymptotic behavior of the critical point that we
obtained in Theorem~\ref{thmex}. First we consider the local and global regularity of critical points of $J$
and thus of the solution obtained above. In particular we will see that, not surprisingly, this critical
point is a strong solution of~\eqref{4nls}. Then we investigate its behaviour at infinity in more detail by
establishing its pointwise decay as well as its asymptotics at infinity, also known as the farfield expansion.

\subsection{Regularity of solutions}

  We start by showing a local regularity result for distributional solutions of the linear problem associated
  with~\eqref{4nls}. We refer to \cite{Man_reg} and \cite{BaoZh_homogeneous,BaoZh_nonhomogeneous} for other
  results in this direction.

\begin{prop} \label{propreg}
  Assume (A1). Let $f\in L^q_{loc}(\R^N)$ for $q\in (1,\infty)$ and let 
  $u\in L^q_{loc}(\R^N)$ be a distributional solution of 
  $
    \Delta^2 u-\beta \Delta u + \alpha u =  f \text{ in } \R^N.
  $
  Then $u\in W^{4,q}_{loc}(\R^N)$ is a strong solution and for all $r>0$  there exists a
  constant $C>0$ depending on $r,\ p$ and $N$ such that for all $x_0 \in \R^N$ 
  \begin{equation}\label{eq:CZ_estimates}
      \|u \|_{W^{4,q} (B_r(x_0))}\leq C
      (\|u\|_{L^{q}(B_{2r}(x_0))}+\|f\|_{L^{q}(B_{2r}(x_0))}).
  \end{equation}
\end{prop}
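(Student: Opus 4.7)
The plan is to treat this as a standard interior Calder\'on-Zygmund estimate for the constant-coefficient elliptic operator $L = \Delta^2 - \beta \Delta + \alpha$ of order four. Although $L$ has zero in its essential spectrum (which is what makes the rest of the paper nontrivial), local regularity depends only on the ellipticity of the principal symbol $|\xi|^4$, so the estimate is of the same type as for the pure biharmonic.

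The first step is a global a priori estimate for test functions: for all $w \in C_c^\infty(\R^N)$,
\[
\|w\|_{W^{4,q}(\R^N)} \leq C\bigl(\|Lw\|_{L^q(\R^N)} + \|w\|_{L^q(\R^N)}\bigr).
\]
This follows from the $L^q$-boundedness (for $1<q<\infty$) of the higher-order Riesz transforms: the operator with Fourier multiplier $(i\xi)^\alpha/|\xi|^4$ for $|\alpha|=4$ is homogeneous of degree zero and smooth off the origin, hence a classical Calder\'on-Zygmund multiplier, giving $\|D^4 w\|_{L^q} \leq C \|\Delta^2 w\|_{L^q}$. Rewriting $\Delta^2 w = Lw + \beta \Delta w - \alpha w$ and using the standard interpolation $\|\Delta w\|_{L^q} \leq \eps \|D^4 w\|_{L^q} + C_\eps \|w\|_{L^q}$, an absorption argument yields the claim.

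Next I would localize by cutoffs and mollification. For fixed $x_0 \in \R^N$ and $r>0$, pick a finite nested family $\eta_0, \dots, \eta_K \in C_c^\infty(B_{2r}(x_0))$ with $\eta_0 \equiv 1$ on $B_r(x_0)$ and $\eta_{j+1} \equiv 1$ on $\supp \eta_j$. Mollify $u$ to $u_\delta := u * \rho_\delta$, which satisfies $L u_\delta = f_\delta := f * \rho_\delta$ classically on $B_{2r-\delta}(x_0)$. Applying the global estimate to $\eta_0 u_\delta \in C_c^\infty(\R^N)$ gives
\[
\|\eta_0 u_\delta\|_{W^{4,q}(\R^N)} \leq C\bigl(\|\eta_0 f_\delta\|_{L^q} + \|[L,\eta_0] u_\delta\|_{L^q} + \|\eta_0 u_\delta\|_{L^q}\bigr),
\]
where $[L,\eta_0]$ is a third-order differential operator with smooth, compactly supported coefficients, the support of whose symbol is contained in $\{\eta_1 \equiv 1\}$ by construction. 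Iterating this inequality along the nested cutoffs and controlling intermediate derivatives by $\|D^j v\|_{L^q} \leq \eps \|D^4 v\|_{L^q} + C_\eps \|v\|_{L^q}$ applied to $v = \eta_{k+1} u_\delta$, one obtains after finitely many steps
\[
\|u_\delta\|_{W^{4,q}(B_r(x_0))} \leq C\bigl(\|u\|_{L^q(B_{2r}(x_0))} + \|f\|_{L^q(B_{2r}(x_0))}\bigr)
\]
with $C$ independent of $\delta$. Letting $\delta \to 0^+$ and using lower semicontinuity of the Sobolev norm under weak convergence produces $u \in W^{4,q}(B_r(x_0))$ with the stated estimate; in particular $Lu = f$ holds almost everywhere, so $u$ is a strong solution.

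The main obstacle is the finite bootstrap over the cutoffs, since the commutator has order three: the interpolation constants $\eps$ must be chosen small enough at each step so that the lower order derivatives on each ring can be absorbed into the $W^{4,q}$-norm associated with the next outer ring. An alternative route is to factor $L = (-\Delta - a_1)(-\Delta - a_2)$, noting that $a_1, a_2 \in \R$ under (A1), and to apply the classical interior $L^q$-estimate for a second-order elliptic operator twice; however, justifying that the intermediate distribution $(-\Delta - a_2)u$ lies in $L^q_{loc}$ reduces to essentially the same cutoff-bootstrap.
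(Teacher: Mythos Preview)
Your approach is sound, but the paper takes a much shorter route: after mollifying to $u_\eps$ (which solves $Lu_\eps=f_\eps$ classically), it simply cites the interior $L^p$-estimates for higher-order elliptic operators of Agmon--Douglis--Nirenberg (Theorem~14.1' in \cite{ADN_estimates_I}) as a black box to obtain
\[
\|u_\eps\|_{W^{4,q}(B_r(x_0))}\le C\bigl(\|u_\eps\|_{L^q(B_{3r/2}(x_0))}+\|f_\eps\|_{L^q(B_{3r/2}(x_0))}\bigr),
\]
and then passes to the limit by applying the same estimate to $u_\eps-u_\delta$, which shows $(u_\eps)$ is Cauchy in $W^{4,q}(B_r(x_0))$. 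What you do instead is essentially reprove the constant-coefficient case of this ADN estimate from Calder\'on--Zygmund multiplier theory plus a cutoff iteration; this is more self-contained but considerably longer, and your passage to the limit via weak lower semicontinuity is an equally valid alternative to the Cauchy argument.

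One technical caution about your bootstrap: with a \emph{fixed} finite chain $\eta_0,\dots,\eta_K$, the recursion $A_k\le CM+\theta A_{k+1}$ (with $\theta<1$ from interpolation) still leaves a tail $\theta^K A_K$, and $A_K=\|\eta_K u_\delta\|_{W^{4,q}}$ depends on $\delta$ through the $C^4$-norm of $\eta_K$, so the resulting bound is not uniform in $\delta$ as written. The commutator $[L,\eta_k]$ stays third order at every step, so the chain does not terminate by order reduction. The standard remedy is the continuous absorption lemma: show $\Phi(t)\le\theta\,\Phi(s)+C(s-t)^{-4}M$ for all $r\le t<s\le 2r$ with $\Phi(t)=\|u_\delta\|_{W^{4,q}(B_t)}$ (finite for each fixed $\delta$ since $u_\delta$ is smooth), and apply the Giaquinta-type iteration lemma to get $\Phi(r)\le Cr^{-4}M$ independently of $\delta$. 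With this adjustment your argument is complete; the issue is purely technical, not conceptual.
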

\begin{proof}
   The proof follows the lines of the proof of Proposition A.1 in~\cite{EW}. We use a mollifier $\rho\in
   C_0^\infty(\R^N)$ and set $u_\eps:=u\ast \rho_\eps,f_\eps:=f\ast \rho_\eps$ for
   $\rho_\eps:=\eps^{-N}\rho(\eps^{-1}\cdot)$. Then the equation 
   $$
     \Delta^2 u_\eps -\beta \Delta u_\eps+\alpha u_\eps 
     =  f_\eps \quad \text{ in } \R^N
   $$
   holds in the classical sense.  
   Applying the interior $L^p$-estimates for higher order elliptic problems from Theorem 14.1' in
   \cite{ADN_estimates_I} we get for sufficiently small $\eps>0$ and for all $r>0,x_0\in\R^N$ 
    \begin{align*}
     \|u_\eps\|_{W^{4,q}(B_r(x_0))}  
     &\leq C (\|u_\eps\|_{L^q(B_{3r/2}(x_0))}+\|f_\eps\|_{L^q(B_{3r/2}(x_0))})  \\ 
     &\leq C (\|u\|_{L^q(B_{2r}(x_0))}+\|f \|_{L^q(B_{2r}(x_0))}).
   \end{align*}
   Since $u_\eps-u_\delta$ solves the corresponding homogeneous Dirichlet problem and $u_\eps\to u$
   in $L^q_{loc}(\R^N)$ as $\eps\to 0$, we deduce that $(u_\eps)$ is a Cauchy sequence
   in $W^{4,q}(B_r(x_0))$ as $\eps\to 0$ for any $r>0,x_0\in\R^N$, hence $u$ lies in $W^{4,q}_{loc}(\R^N)$
   and satisfies the estimates~\eqref{eq:CZ_estimates}.
\end{proof}

 We go on with proving global regularity results for solutions of the linear problem.

  \begin{prop} \label{prop:global_reg_linear}
    Assume (A1). Let $f\in L^{p^\prime}(\R^N)\cap L^q(\R^N)$ for $p\geq \frac{2(N+1)}{N-1},q\in
    (1,\infty)$ and $u=\bold{R} f \in L^q(\R^N)$. Then $u\in W^{4,q}(\R^N)$ is a strong
    solution of $
       \Delta^2 u-\beta \Delta u+\alpha u =f \text{ in } \R^N 
     $ 
     and there is a $C>0$ such that 
     $$
       \|u\|_{W^{4,q}(\R^N)} \leq C(\|u\|_{L^q(\R^N)}+\|f\|_{L^q(\R^N)}).
     $$
  \end{prop}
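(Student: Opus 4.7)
The plan is to derive the global estimate by summing the local estimates from Proposition~\ref{propreg} over a locally finite covering of $\R^N$, exploiting the crucial fact that the constant in~\eqref{eq:CZ_estimates} does not depend on the base point $x_0$.

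First I would use the representation $u=\bold R f$ together with the mapping properties of $\bold R$ from Theorem~\ref{thminvopbound} to verify that $u$ is at least a distributional solution of $\Delta^2u-\beta\Delta u+\alpha u=f$. This was essentially recorded in the discussion of~\eqref{eq:def_resolvent}; the condition $f\in L^{p'}(\R^N)$ (with $p\geq \frac{2(N+1)}{N-1}$) is exactly what is needed for $\bold R f$ to be well-defined through the limiting absorption principle. Once the distributional equation is in hand, Proposition~\ref{propreg} applies and yields $u\in W^{4,q}_{\mathrm{loc}}(\R^N)$ as a strong solution, together with the local estimate
\begin{equation*}
\|u\|_{W^{4,q}(B_1(x_0))}\leq C\bigl(\|u\|_{L^q(B_2(x_0))}+\|f\|_{L^q(B_2(x_0))}\bigr)
\end{equation*}
for a constant $C$ independent of $x_0\in\R^N$.

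Next, I would fix a covering $\{B_1(x_k)\}_{k\in\Z^N}$ of $\R^N$ with centers on the integer lattice. By construction, $\R^N=\bigcup_k B_1(x_k)$, and the dilated balls $\{B_2(x_k)\}$ have an overlap number $M=M(N)<\infty$, meaning every point of $\R^N$ lies in at most $M$ of them. Raising the local estimate to the $q$-th power and summing gives, for every multi-index $|\gamma|\leq 4$,
\begin{align*}
\int_{\R^N}|D^\gamma u|^q\,dx
&\leq \sum_{k\in\Z^N}\int_{B_1(x_k)}|D^\gamma u|^q\,dx
\leq \sum_{k\in\Z^N}\|u\|_{W^{4,q}(B_1(x_k))}^q\\
&\leq C^q\sum_{k\in\Z^N}\bigl(\|u\|_{L^q(B_2(x_k))}^q+\|f\|_{L^q(B_2(x_k))}^q\bigr)\\
&\leq C^q M\bigl(\|u\|_{L^q(\R^N)}^q+\|f\|_{L^q(\R^N)}^q\bigr),
\end{align*}
where the last inequality uses the bounded overlap property. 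Summing over $|\gamma|\leq 4$ and taking the $q$-th root produces the desired global bound
\begin{equation*}
\|u\|_{W^{4,q}(\R^N)}\leq C'\bigl(\|u\|_{L^q(\R^N)}+\|f\|_{L^q(\R^N)}\bigr).
\end{equation*}

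There is no real obstacle here: the essential ingredient is the uniformity of the Agmon--Douglis--Nirenberg constant in Proposition~\ref{propreg} with respect to the base point $x_0$, which comes for free from the translation invariance of the operator $L$. The only thing to be slightly careful about is to assemble a priori only the local $L^q$-norms of $u$ and $f$ (which are finite by assumption and then summable with bounded overlap), so that both finiteness of $\|u\|_{W^{4,q}(\R^N)}$ and the quantitative bound are obtained simultaneously from the same computation.
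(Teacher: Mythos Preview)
Your argument is correct, but it follows a genuinely different route from the paper. The paper does not sum local estimates over a covering; instead it mollifies $u$ and $f$ and exploits the factorization $L=(-\Delta-a_1)(-\Delta-a_2)$ to split the fourth-order equation into a cascade of two second-order problems. In the case $a_1>0>a_2$ the operator $-\Delta-a_2$ is coercive on $\R^N$, so global Calder\'on--Zygmund estimates give $v_\eps:=(-\Delta-a_1)u_\eps\in W^{2,q}(\R^N)$ with $\|v_\eps\|_{W^{2,q}}\leq C\|f_\eps\|_{L^q}$; the remaining Helmholtz-type equation $-\Delta u_\eps-a_1u_\eps=v_\eps$ is then rewritten as $-\Delta(\Delta u_\eps)+\Delta u_\eps=(a_1+1)\Delta u_\eps+\Delta v_\eps$, and an interpolation-plus-absorption step yields the $W^{4,q}$ bound. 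The other parameter case is reduced to this one by moving $(1+\alpha)u$ to the right-hand side. Your covering argument is shorter and more elementary in that it treats Proposition~\ref{propreg} purely as a black box and needs nothing beyond translation invariance and bounded overlap; the paper's approach, by contrast, avoids re-invoking the interior ADN estimates and makes the second-order structure of $L$ do the work directly.
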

  \begin{proof} 
    We first discuss the case $a_1>0>a_2$. With the notation from the previous
    proposition we have
    \begin{equation} \label{eq:ueps_veps_system}
      -\Delta u_\eps - a_1  u_\eps = v_\eps\quad\text{in }\R^N,\qquad
      -\Delta v_\eps - a_2  v_\eps = \sqrt{\beta^2-4\alpha} f_\eps\quad\text{in }\R^N
    \end{equation}
    for some function $v_\eps\in W^{2,q}(\R^N)$. Thanks to $a_2<0$, the second equation and global
    $L^p$-estimates (see for instance Theorem C.1.3.(iii) in \cite{LorBer_analytical}) 
    imply that $(v_\eps)$ is a Cauchy sequence in $W^{2,q}(\R^N)$ and satisfies
    $$
      \|v_\eps\|_{W^{2,q}(\R^N)}\leq C \|f_\eps\|_{L^q(\R^N)} \leq C \|f\|_{L^q(\R^N)}.
    $$
    From the first equation for $u_\eps$ we deduce 
    $$
      -\Delta (\Delta u_\eps) + \Delta u_\eps = (a_1+1)\Delta u_\eps + \Delta v_\eps,
    $$ 
    so that the same estimates as above together with interpolation estimates imply  
    \begin{align*}
      \|u_\eps\|_{W^{4,q}(\R^N)}
      &\leq C\|\Delta u_\eps\|_{W^{2,q}(\R^N)} \\
      &\leq C(\|(a_\eps+1)\Delta u_\eps + \Delta v_\eps\|_{L^q(\R^N)}) \\
      &\leq C (\|u_\eps\|_{W^{2,q}(\R^N)} + \|v_\eps\|_{W^{2,q}(\R^N)}) \\
      &\leq \frac{1}{2} \|u_\eps\|_{W^{4,q}(\R^N)} + C(\|u_\eps\|_{L^q(\R^N)} + \|f_\eps\|_{L^q(\R^N)})
      \\
      &\leq \frac{1}{2} \|u_\eps\|_{W^{4,q}(\R^N)} + C(\|u\|_{L^q(\R^N)}+\|f\|_{L^q(\R^N)}),
    \end{align*}
    which proves the boundedness of $(u_\eps)$ in $W^{4,q}(\R^N)$. Performing the corresponding estimates for
    $u_\eps-u_\delta$, which solves \eqref{eq:ueps_veps_system} with $f_\eps$ replaced by 0, we find that 
    $(u_\eps)$ is a Cauchy sequence as $\eps\to 0$ in $W^{4,q}(\R^N)$ converging to
    $u\in W^{4,q}(\R^N)$ satisfying also the above estimate.
    
    \medskip
    
    In the other case $a_1>a_2\geq 0$ we rewrite the equation as
    $
      \Delta^2 u-\beta \Delta u - u = \tilde f  \text{ in } \R^N 
    $
    where $\tilde f:= f-(1+\alpha)u$. Now the symbol of the differential operator on the left hand side has
    one positive and one negative zero and $\|\tilde f\|_{L^q(\R^N)} \leq C(\|f\|_{L^q(\R^N)} + \|u\|_{L^q(\R^N)})$, 
    so that our considerations from above yield the result.
  \end{proof}
  
  With these preliminary results we deduce the regularity of the critical point constructed in
  Theorem~\ref{mainthm}.
  
  \begin{thm}\label{thm_regularity}
     Assume (A1),(A2) and let $u\in L^p(\R^N)$ be a solution to $u=\bold{R}(\Gamma |u|^{p-2}u)$. Then $u\in
     W^{4,q}(\R^N)\cap C^{3,\alpha}(\R^N)$ for all $q\in [p,\infty),\alpha\in [0,1)$ and $u$ 
     is a strong solution of
     $$
       \Delta^2 u-\beta \Delta u+\alpha u = \Gamma|u|^{p-2}u \quad \text{in } \R^N.
     $$ 
  \end{thm}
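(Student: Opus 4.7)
The plan is a bootstrap combining the resolvent estimates of Theorem~\ref{thminvopbound} with the local and global regularity results of Propositions~\ref{propreg} and~\ref{prop:global_reg_linear}. It splits naturally into three linked tasks: promoting the $L^p$-integrability of $u$ to $L^q$ for every $q\in[p,\infty)$, lifting this to the Sobolev and H\"older scales, and finally reading the equation in the strong sense.

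First I would iteratively improve the global integrability of $u$. Setting $r_0:=p$ and assuming inductively $u\in L^{r_n}(\R^N)$, one has $\Gamma|u|^{p-2}u\in L^{r_n/(p-1)}(\R^N)$ since $\Gamma\in L^\infty$. Applying Theorem~\ref{thminvopbound} with source exponent $P=r_n/(p-1)$ to the identity $u=\bold R(\Gamma|u|^{p-2}u)$ places $u$ in $L^{r_{n+1}}(\R^N)$ for any $r_{n+1}$ such that $(r_n/(p-1),r_{n+1})$ lies in the admissible region of~\eqref{condgutie}. The source constraint $(p-1)/r_n>(N+1)/(2N)$ holds at $r_0=p$ thanks to the lower bound $p>2(N+1)/(N-1)>2N/(N-1)$ in (A2), while the upper bound $p<2N/(N-4)_+$ ensures that the maximal admissible $r_{n+1}$ (given, for $N\geq 5$, by $1/r_{n+1}=\max\{(p-1)/r_n-4/N,0^+\}$, and allowed to be arbitrarily large when $N\leq 4$) strictly exceeds $r_n$. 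The recursion admits no finite accumulation point compatible with $r_0=p$, so after finitely many iterations $r_n\geq N(p-1)/4$ and $r_{n+1}$ may be chosen arbitrarily large. Interpolating between $L^p(\R^N)$ and $L^{r_n}(\R^N)$ then yields $u\in L^q(\R^N)$ for every $q\in[p,\infty)$; in case the resolvent iteration stalls in intermediate dimensions before $L^\infty$-reach, one complements it with a single application of the Sobolev embedding $W^{4,q}(\R^N)\hookrightarrow L^\infty(\R^N)$ that becomes available once $4q>N$.

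With full integrability in hand, for any $q\in[p,\infty)$ the nonlinearity $f:=\Gamma|u|^{p-2}u$ lies in $L^{p'}(\R^N)\cap L^q(\R^N)$ because $u\in L^{(p-1)q}(\R^N)$ by the previous step. Proposition~\ref{prop:global_reg_linear} then delivers $u=\bold R f\in W^{4,q}(\R^N)$ together with the natural a priori bound. The Sobolev embedding $W^{4,q}(\R^N)\hookrightarrow C^{3,\alpha}(\R^N)$, valid for $q>N/(1-\alpha)$, provides $u\in C^{3,\alpha}(\R^N)$ for every $\alpha\in[0,1)$. Finally, since $u\in W^{4,q}_{\mathrm{loc}}(\R^N)$ by Proposition~\ref{propreg}, the distributional identity $u=\bold R(\Gamma|u|^{p-2}u)$ translates into the pointwise a.e.\ equation $\Delta^2 u-\beta\Delta u+\alpha u=\Gamma|u|^{p-2}u$, so that $u$ is indeed a strong solution.

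The delicate part of the argument is the integrability bootstrap in the second paragraph: one must carefully track the source and target constraints of Theorem~\ref{thminvopbound} along the iteration and confirm strict gain at each step. Both sides of assumption (A2) on $p$ enter here in an essential way—the lower bound $2(N+1)/(N-1)$ unlocks the resolvent estimates in the first place, while the upper bound $2N/(N-4)_+$ ensures that the nonlinear scaling beats the loss built into~\eqref{condgutie}—which is precisely what pins down the admissible range of exponents in Theorem~\ref{mainthm}.
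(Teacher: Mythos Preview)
Your integrability bootstrap via Theorem~\ref{thminvopbound} contains a gap. You correctly note that the source constraint $(p-1)/r_n>(N+1)/(2N)$ holds at $r_0=p$, but you do not recheck it along the iteration. As $r_n$ increases, this constraint fails once $r_n\geq \frac{2N(p-1)}{N+1}$, and from that moment the resolvent can no longer be applied with source exponent $r_n/(p-1)$. For $N\leq 7$ this does no harm, since $\frac{2N(p-1)}{N+1}\geq\frac{N(p-1)}{4}$ and the recursion reaches your threshold first; but for $N\geq 8$ the source constraint binds \emph{before} $r_n$ reaches $\frac{N(p-1)}{4}$, and the best one can then extract from Theorem~\ref{thminvopbound} (taking the source just below $\frac{2N}{N+1}$) is a target close to $\frac{2N}{N-7}$. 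Your claim that $r_{n+1}$ may be chosen arbitrarily large once $r_n\geq N(p-1)/4$ therefore never activates. The suggested fix---``a single application'' of $W^{4,q}\hookrightarrow L^\infty$---does not close the gap either: to get $u\in W^{4,q}$ with $q>N/4$ from Proposition~\ref{prop:global_reg_linear} you need both $u\in L^q$ and $f=\Gamma|u|^{p-2}u\in L^q$, i.e.\ $u\in L^{q(p-1)}$ with $q(p-1)>\frac{N(p-1)}{4}$, and for $N\geq 11$ with $p$ near $\frac{2N}{N-4}$ one checks that $\frac{2N}{N-7}<\frac{N(p-1)}{4}$, so the stalled resolvent output is insufficient.

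The paper sidesteps this difficulty by taking a different route: it bootstraps to $u\in L^\infty(\R^N)$ using the \emph{local} Calder\'on--Zygmund estimate of Proposition~\ref{propreg} combined with Sobolev embedding, iterated with constants uniform in the center $x_0$. That local estimate has no restriction of Stein--Tomas type on the source exponent, so the backward recursion $q_{n+1}=\max\{\frac{Nq_n}{N+4q_n}(p-1),p\}$ from $q_0=\infty$ reaches $p$ in finitely many steps in every dimension, using only $p<\frac{2N}{(N-4)_+}$. Once $u\in L^\infty(\R^N)\cap L^p(\R^N)$ is known, your second and third paragraphs go through unchanged. Your global-resolvent idea can in fact be repaired by interleaving it with repeated (not single) applications of Proposition~\ref{prop:global_reg_linear} plus Sobolev embedding, but this amounts to reproducing the paper's local argument in global dress.
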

  \begin{proof}
    It suffices to prove $u\in L^\infty(\R^N)$. Indeed, having shown this, we may apply
    Proposition~\ref{prop:global_reg_linear} to $f:=\Gamma |u|^{p-2}u \in L^{p^\prime}(\R^N)\cap
    L^q(\R^N)$ for all $q\in [p^\prime,\infty)$. Proposition~\ref{prop:global_reg_linear} yields $u\in W^{4,q}(\R^N)$
    for all $q\in [p,\infty)$ and thus, by Morrey's imbedding Theorem, $u\in C^{3,\alpha}(\R^N)$ for all
    $\alpha\in [0,1)$. In order to prove the boundedness of $u$ we iterate the local estimates from
    Proposition~\ref{propreg}.  From Sobolev's imbedding theorem and Proposition~\ref{propreg} we get for all
    $q\in [1,\infty]$ and all $s\geq \frac{Nq}{N+4q},s>1$  
    \begin{align*}
      \|u\|_{L^q(B_r(x_0))}
      &\leq C \|u\|_{W^{4,s}(B_r(x_0))} \\
      &\leq  C(\|u\|_{L^s(B_{2r}(x_0))}+\|\Gamma
      |u|^{p-2}u\|_{L^s(B_{2r}(x_0))}) \\
       &\leq  C(1+\|u\|_{L^{s(p-1)}(B_{2r}(x_0))}^{p-1})
    \end{align*}
    for some positive $C$ dependent of $r,N,s,q,\|\Gamma\|_{L^\infty(\R^N)}$ but not on $x_0$. Hence, for 
    $q_0:=\infty$ and $q_{n+1}:=\max\{\frac{Nq_n}{N+4q_n}(p-1),p\}$ we get
    $$
      \|u\|_{L^{q_n}(B_{2^nr}(x_0))} \leq C_n(1+\|u\|_{L^{q_{n+1}}(B_{2^{n+1}r}(x_0))})^{p-1} \quad\text{for
      }n\in\N_0.
    $$
    Since $q_n\geq p>\frac{N(p-2)}{4}$ the sequence $(q_n)$ decreases until it reaches the
    value $p$ after finitely many steps. This implies  
    \begin{align*}
      \|u\|_{L^\infty(B_r(x_0))}
      \leq P(\|u\|_{L^p(\R^N)})
    \end{align*} 
    for some polynomial $P$ with positive coefficients and the proof is finished, since $P$ does not depend
    on $x_0$.
  
  \end{proof}

\subsection{Decay and farfield expansion}

In this section we establish the pointwise decay and the farfield expansion of the solution obtained in
Theorem~\ref{thmex}. The proof of this result follows again the lines of earlier results due to Ev\'{e}quoz
and Weth.  
 
\begin{thm}\label{thm:decay}
  Assume (A1),(A2) and $u=\bold{R}(f)$ where $f:=\Gamma |u|^{p-2}u\in L^{p^\prime}(\R^N)$.
  Then  
  \begin{equation} \label{eqfarfield}
    \lim_{R\rightarrow \infty} \frac{1}{R}\int_{B_R} | u(x) -\Real(U_f )(x)|^2 \,dx =0. 
  \end{equation}
  Moreover, if $N\in\{2,3\}$ or $N\geq 4,p>\frac{3N-1}{N-1}$, then we have
  \begin{itemize}
    \item[(i)] There is a $C>0$ such that $|u(x)|\leq C (1+|x|)^{\frac{1-N}{2}}$ for all $x\in\R^N$.
    \item[(ii)] $u(x) =\Real(U_f)(x) + o(|x|^{\frac{1-N}{2}})$ as $|x|\to\infty$. 
  \end{itemize}
\end{thm}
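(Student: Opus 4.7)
The plan is to follow the Ev\'equoz--Weth approach adapted to the fourth-order setting, relying on the splitting $G=G_1+G_2$ into resonant and nonresonant parts from the proof of Theorem~\ref{thminvopbound}. Since $u=\bold{R} f=\Real(G\ast f)$ with $f\in L^{p'}(\R^N)$, we decompose
$$
  u=\Real(G_1\ast f)+\Real(G_2\ast f),
$$
and show that the asymptotic information of $u$ is carried entirely by $G_1\ast f$.

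For the averaged statement~\eqref{eqfarfield}, first observe that $G_2\ast f$ has fast mean-square decay: using the pointwise estimates~\eqref{estG2}, \eqref{estG2a_2=0} together with Young's inequality, one shows that $\frac{1}{R}\int_{B_R}|G_2\ast f|^2\,dx\to 0$ as $R\to\infty$, so this contribution vanishes. For $G_1\ast f$, we insert the large-$|x|$ expansion of the Hankel functions from~\eqref{eq:asymptoticsH}, namely
$$
  g_{a_j}(x)=\sqrt{\tfrac{\pi}{2}}\,\frac{a_j^{(N-3)/4}\,e^{i(\sqrt{a_j}|x|-(N-3)\pi/4)}}{|x|^{(N-1)/2}}+O(|x|^{-(N+1)/2})
$$
as $|x|\to\infty$ (with the appropriate modifications when $a_2=0$ in low dimensions). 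The leading term of $g_{a_j}(x-y)$ factors as $g_{a_j}(x)e^{-i\sqrt{a_j}(x/|x|)\cdot y}(1+o(1))$, so that convolving with $f$ produces precisely the Fourier transform of $f$ restricted to the sphere $\sqrt{a_j}S^{N-1}$. Because $p'\leq 2(N+1)/(N+3)$ by (A2), the Stein--Tomas theorem~\ref{STT} guarantees that this restriction defines an $L^2(S^{N-1})$-function, and the expansion produces exactly $U_f$ after summing the $a_1$- and (if present) $a_2$-contributions. The $L^2$-averaged control of the remainder is obtained via an Agmon--H\"ormander type bound
$$
  \sup_{R>0}\frac{1}{R}\int_{B_R}|G_1\ast f|^2\,dx\leq C\|f\|_{L^{p'}(\R^N)}^2,
$$
which is implicit in~\eqref{RT2}; together with a density argument approximating $f$ by Schwartz functions (where classical stationary-phase yields the pointwise asymptotic), this gives~\eqref{eqfarfield}.

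For the pointwise statements~(i)--(ii) under the additional restriction $p>(3N-1)/(N-1)$ (only needed when $N\geq 4$), Theorem~\ref{thm_regularity} ensures $u\in L^p(\R^N)\cap L^\infty(\R^N)$ and therefore $f\in L^q(\R^N)$ for every $q\in[p',\infty]$. To prove~(i) I would split $G_1\ast f$ into the near-field $\{|x-y|\leq|x|/2\}$ and the far-field $\{|x-y|\geq|x|/2\}$: the near-field is controlled by $|G_1(x-y)|\lesssim(1+|x-y|)^{(1-N)/2}$ from~\eqref{gutiee1} together with H\"older's inequality on the extra integrability of $f$, while the far-field is handled using the bulk decay of $G_1$ with respect to $|y|$. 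The threshold $p>(3N-1)/(N-1)$ is precisely what makes these integrals converge at the desired rate $|x|^{(1-N)/2}$; the contribution of $G_2\ast f$ is faster by~\eqref{estG2},\eqref{estG2a_2=0}. For~(ii) I would approximate $f$ by $f_\eps\in C_c^\infty(\R^N)$ with $\|f-f_\eps\|_{L^{p'}}+\|f-f_\eps\|_{L^q}$ small: applying~(i) to $\bold{R}(f-f_\eps)$ gives an error of size $o(|x|^{(1-N)/2})$ uniformly in $\eps$, whereas $G\ast f_\eps$ admits the exact asymptotic $\Real(U_{f_\eps})$ from a direct expansion of the Hankel function inside the (compactly supported) convolution. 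Letting $\eps\to 0$ using continuity of Fourier transforms of compactly supported data on spheres concludes~(ii).

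The main obstacle lies in the treatment of $G_1\ast f$ without compact support on $f$: since $G_1\notin L^1(\R^N)$, absolute estimates are useless, and the oscillatory cancellation in the Hankel function is essential --- this is where Stein--Tomas enters crucially. Upgrading the mean-square asymptotic to a genuinely pointwise one for $N\geq 4$ is the most delicate point and explains the need for the extra assumption $p>(3N-1)/(N-1)$; in its absence only~\eqref{eqfarfield} is available.
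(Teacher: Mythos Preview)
Your overall strategy is sound and parallels the paper's, but the paper organizes things differently: instead of the $G_1+G_2$ splitting it uses the factorization $G=(g_{a_1}-g_{a_2})/\sqrt{\beta^2-4\alpha}$ from~\eqref{eq:def_G} and reduces each piece to the second-order results of Ev\'equoz--Weth (Propositions~2.6--2.8 and Lemma~2.9 of~\cite{EW}) applied to $g_{a_j}\ast f$. The paper also begins with an explicit bootstrap, iterating Theorem~\ref{thminvopbound} to obtain $u\in L^r(\R^N)$ for all $r>\tfrac{2N}{N-1}$; this is stronger than merely $u\in L^p\cap L^\infty$ and is what places $V:=\Gamma|u|^{p-2}$ into $L^q$ for some $q<\tfrac{2N}{N+1}$, the hypothesis needed for the decay lemma below.

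The genuine gap is your argument for~(i). A single near-field/far-field H\"older split cannot deliver the rate $(1+|x|)^{(1-N)/2}$: on the near-field $\{|x-y|\le|x|/2\}$ you only know $|y|\ge|x|/2$, and without a priori pointwise decay of $f$ the tail $\|f\|_{L^{s'}(\{|y|\ge|x|/2\})}$ tends to zero with no quantitative rate, while on the far-field you need $f\in L^1$, which again presupposes decay of $u$. The paper circumvents this circularity by writing $f=Vu$ and invoking Lemma~2.9 of~\cite{EW}, an iterative scheme that feeds $u=\Real G\ast(Vu)$ back into itself and uses $V\in L^q\cap L^\infty$ with $q<\tfrac{2N}{N+1}$ (this is precisely where $p>\tfrac{3N-1}{N-1}$ enters) to upgrade the decay of $u$ step by step. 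Once~(i) is obtained this way one has $|f(x)|\lesssim(1+|x|)^{-N-\delta}$, and~(ii) follows from a direct expansion of $G\ast f$ (Proposition~2.8 in~\cite{EW}) with no approximation of $f$ required; your route for~(ii) via ``applying~(i) to $\bold R(f-f_\eps)$'' is also problematic because~(i) as stated is a nonlinear conclusion about $u$, not a linear bound on $\bold R$ acting on arbitrary data.
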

\begin{proof}
  From Theorem~\ref{thminvopbound} we get that $u\in L^r(\R^N)$ implies $f\in L^{r/(p-1)}(\R^N)$ and
  thus $u\in L^{\tilde r}(\R^N)$ whenever 
  $$
    \tilde r:= \frac{r(N+1)}{-2r+(p-1)(N+1)}\quad\text{and}\quad
    \frac{2N(N+1)}{N^2+4N-1}(p-1)=:r_* <r\leq p.
  $$
  So we put $r_0:= p$ and define $r_n$ inductively via
  $$
    r_{n+1} := \frac{1}{2} \Big( r_n + \max\big\{ \frac{r_n(N+1)}{-2r_n+(p-1)(N+1)}, r_* \big\}\Big). 
  $$
  Then $r\leq p<\frac{N+1}{2}(p-2)$ allows to prove inductively $r_*<r_{n+1}<r_n\leq r$ for all $n\in\N_0$
  so that the sequence $(r_n)$ strictly decreases
  to $r_*$, which gives $u\in L^\infty(\R^N)\cap L^r(\R^N)$ for all $r>\frac{2N}{N-1}$. (The boundedness was proved in
  Theorem~\ref{thm_regularity}.) In particular, we have $f=\Gamma |u|^{p-2}u \in L^{r'}(\R^N)$ for
  $r=\frac{2(N+1)}{N-1}$ as well as the representation formula 
  \begin{align} \label{eq:representation_formula_for_u} 
    u(x) 
    = (\Real G\ast f)(x) 
    = \frac{1}{\sqrt{\beta^2-4\alpha}} \Big( ( \Real g_{a_1}\ast f)(x) - (\Real g_{a_2}\ast f)(x)\Big), 
  \end{align}
  see~\eqref{eq:def_G}.  Our strategy is to prove first~\eqref{eqfarfield} for $N\geq 4$. Then we show 
  ~(i),(ii) for $N\in\{2,3\}$ or $N\geq 4,p>\frac{3N-1}{N-1}$. Since~(i),(ii)
  implies~\eqref{eqfarfield}, this will prove the assertion.
  
  \medskip
  
  In the case $N\geq 4$ and $a_1>a_2>0$ we may directly deduce \eqref{eqfarfield} from Proposition~2.7
  in~\cite{EW}. Indeed,  the
  formula~\eqref{eq:representation_formula_for_u} and~\eqref{eq:EWresolvent_vs_ga1} yields 
  $$
    u(x)  
    =  \frac{1}{\sqrt{\beta^2-4\alpha}} \Real\Big( \frac{1}{a_1} \mathcal{R}_1\Big(f(\frac{\cdot}{\sqrt
    a_1})\Big)(\sqrt a_1 x)  -  \frac{1}{a_2} \mathcal{R}_1\Big(f(\frac{\cdot}{\sqrt
    a_2})\Big)(\sqrt a_2 x)\Big)
  $$
  where $\mathcal{R}_1$ is the resolvent for the Helmholtz operator $-\Delta  -1$ studied in~\cite{Ev,EW}.
  Using $f\in L^{r'}(\R^N)$ for $r=\frac{2(N+1)}{N-1}$ (see above) and Proposition~2.7 in~\cite{EW} yields
  \eqref{eqfarfield} because of
  \begin{align*}  
    & \frac{1}{\sqrt{\beta^2-4\alpha}} \frac{1}{a} \sqrt{\frac{\pi}{2}} \frac{e^{i(\sqrt{a}
    |x|-\frac{N-3}{4}\pi)}}{|\sqrt{a} x|^{\frac{N-1}{2}}}   \widehat{f\big(\frac{\cdot}{\sqrt
    a}\big)}\Big(\frac{\sqrt a x}{|\sqrt a x|}\Big)  \\
    &= \frac{1}{\sqrt{\beta^2-4\alpha}} \frac{1}{a}\sqrt{\frac{\pi}{2}} \frac{e^{i(\sqrt{a}
    |x|-\frac{N-3}{4}\pi)}}{|\sqrt{a} x|^{\frac{N-1}{2}}}   \sqrt{a}^N \hat{f}(\sqrt a \frac{x}{|x|}) \\
    &= \frac{a^{\frac{N-3}{4}}}{\sqrt{\beta^2-4\alpha}} \sqrt{\frac{\pi}{2}} \frac{e^{i(\sqrt{a}
    |x|-\frac{N-3}{4}\pi)}}{|x|^{\frac{N-1}{2}}} \hat{f}\big(\sqrt{a} \frac{x}{|x|}\big)
  \end{align*}
  for $a\in\{a_1,a_2\}$, see the definition of $U_f$ in front of Theorem~\ref{mainthm}. 
  
  \medskip
  
  In the case $N\geq 4$ and $a_1>0\geq a_2$ we show that result from the above-mentioned Proposition remains true
  after slight modification. To this end we first consider $\tilde f\in C_0^\infty(\R^N)$. Exactly the same
  proof as Proposition~2.6 in~\cite{EW} yields $(\mathfrak R \tilde f)(x) = U_{\tilde f}(x) +
  o(|x|^{(1-N)/2})$. Indeed, the proof of this proposition only exploits the  formula
  $$
    (\mathcal{R}_1\tilde f)(x) = \gamma_N \int_{B_R}
    \frac{e^{i|x-y|}}{|x-y|^{(N-1)/2}}(1+\delta(|x-y|))\tilde f(y)\,dy 
  $$
  for $|x|\geq 2R$ and $R$ is chosen so large that $\supp(\tilde f)\subset B_R$ holds. Here, the function
  $\delta$ satisfies $\sup_{r\geq 1}r|\delta(r)|<\infty$ where $\gamma_N =
  \frac{1}{2}(2\pi)^{(1-N)/2}e^{-i(N-3)\pi/4}$, see the bottom of~p.697 in~\cite{EW}. In view of 
  $$
    (\mathfrak{R}\tilde f)(x) = \frac{\gamma_N}{\sqrt{\beta^2-4\alpha}} 
    \int_{B_R} \frac{e^{i|x-y|}}{|x-y|^{(N-1)/2}}(1+\tilde \delta(|x-y|))\tilde f(y)\,dy 
  $$
  for $|x|\geq 2R$ large enough and $\sup_{r\geq 1}\sqrt{r}|\tilde
  \delta(r)|<\infty$ we therefore get $(\mathfrak R \tilde f)(x) = U_{\tilde f}(x) + o(|x|^{(1-N)/2})$
  whenever $\tilde f\in C_0^\infty(\R^N)$.
  Notice that this property of $\tilde\delta$ follows from \eqref{asymgreen}, i.e., from the fact that
  the Green's function $g_{a_2}$ decays exponentially if $a_2<0$ or like $|x|^{2-N}$ if $a_2=0$ and hence
  faster than $g_{a_1}$ at infinity. With this result we deduce~\eqref{eqfarfield} by approximation of $f$ in $L^{r^\prime}(\R^N)$ by test
  functions exactly as in the proof of Proposition~2.7~\cite{EW}.  
  
  \medskip

  Now let us assume $N=3$ or $N\geq 4,p>\frac{3N-1}{N-1}$. Then we have $f=Vu$ and $u= \Real G\ast
  Vu$ for $V=\Gamma|u|^{p-2}\in L^q(\R^N)\cap L^s(\R^N)$ with $s=\infty>\frac{N}{2}$ and $q<\frac{2N}{N+1}$
  because of $\frac{2N}{N+1}>\frac{2N}{(N-1)(p-2)}$. Furthermore, we have $Vu \in L^1(\R^N)\cap L^\infty(\R^N)$ due to
  $\frac{2N}{(N-1)(p-1)}<1$. Exploiting the estimate $|G(z)|\leq C\max\{|z|^{2-N},|z|^{\frac{1-N}{2}}\}$ 
  claim (i) follows from Lemma~2.9 in~\cite{EW}. In particular, $p>\frac{3N-1}{N-1}$ implies  
  $|f(x)|=\Gamma(x)|u(x)|^{p-1}\leq C(1+|x|)^{-N-\delta}$ for some $\delta>0$ and all $x\in\R^N$, so that
  \eqref{eq:representation_formula_for_u} together with Proposition~2.8 in~\cite{EW} ($a_2>0$) or
  the proof of Claim 2 in Proposition~6.3 in \cite{BCGJ2} ($a_2=0$), respectively, gives
  \begin{align*}
    u(x) 
    = ( \Real G\ast f)(x) 
    = \Real(U_f)(x) + o(|x|^{\frac{1-N}{2}}).
  \end{align*}
  In the case $N=2$ the corresponding result follows from Proposition~2.2 in~\cite{Ev}. 
  This finishes the proof.  
\end{proof}

\medskip

Finally let us add that the solution $u=\bold{R}(\Gamma|u|^{p-2}u)$ described in Theorem~\ref{thm:decay} is
the real part of a complex-valued solution 
$$
   \tilde u:=\mathfrak R(\Gamma|u|^{p-2}u) = G\ast (\Gamma|u|^{p-2}u). 
$$ 
As above, one shows that in the case $a_1>0>a_2$ this function satisfies Sommerfeld's outgoing radiation
condition in the following integral sense
\begin{equation}
\label{some2}
  \lim_{R\rightarrow \infty}
  \frac{1}{R}\int_{B_R} \Big|\nabla \tilde u (x) - i \sqrt{a_1} \tilde{u}(x) \frac{x}{|x|} \Big|^2 \,dx =0
  \quad (a_1>0>a_2),
\end{equation} 
 see equation~(53) in~\cite{EW} for the corresponding result in the Helmholtz case. Notice that 
 in this way the solution $\tilde u$ inherits the radiation condition from the fundamental solution $G$, 
 see~\eqref{sommerfeld}. In the case $a_1>a_2>0$,
 however, defining $\tilde u_j:= g_{a_j}\ast (\Gamma|u|^{p-2}u)$ for $j=1,2$   we get   
\begin{equation}
\label{some3}
  \tilde u=\frac{\tilde u_1-\tilde u_2}{\sqrt{\beta^2-4\alpha}} \quad\text{with } 
  \lim_{R\rightarrow \infty}
  \frac{1}{R}\int_{B_R} \Big|\nabla \tilde u_j (x) - i \sqrt{a_j} \tilde{u_j}(x) \frac{x}{|x|} \Big|^2 \,dx
  =0 \quad (a_1>a_2>0).
\end{equation}

\section*{Acknowledgements}

D. Bonheure \& J.B. Casteras are supported by MIS F.4508.14 (FNRS), PDR T.1110.14F (FNRS); J.B. Casteras is supported by the
Belgian Fonds de la Recherche Scientifique -- FNRS;
D. Bonheure is partially supported by the project ERC Advanced Grant  2013 n. 339958: ``Complex Patterns for
Strongly Interacting Dynamical Systems - COMPAT'' and by ARC AUWB-2012-12/17-ULB1- IAPAS. R. Mandel
gratefully acknowledges financial support by the Deutsche Forschungsgemeinschaft (DFG, German Research
Foundation) through the Collaborative Research Center 1173.

\bibliographystyle{plain}
\bibliography{bibhelmholtz}

\begin{thebibliography}{10}

\bibitem{AbrSte_handbook}
Milton Abramowitz and Irene~A. Stegun.
\newblock {\em Handbook of mathematical functions with formulas, graphs, and
  mathematical tables}, volume~55 of {\em National Bureau of Standards Applied
  Mathematics Series}.
\newblock For sale by the Superintendent of Documents, U.S. Government Printing
  Office, Washington, D.C., 1964.

\bibitem{ADN_estimates_I}
S.~Agmon, A.~Douglis, and L.~Nirenberg.
\newblock Estimates near the boundary for solutions of elliptic partial
  differential equations satisfying general boundary conditions. {I}.
\newblock {\em Comm. Pure Appl. Math.}, 12:623--727, 1959.

\bibitem{MR1745182}
M.~Ben-Artzi, H.~Koch, and J.-C Saut.
\newblock Dispersion estimates for fourth order {S}chr\"odinger equations.
\newblock {\em C. R. Acad. Sci. Paris S\'er. I Math.}, 330(2):87--92, 2000.

\bibitem{BCDN}
Denis Bonheure, Jean-Baptiste Casteras, Ederson~Moreira Dos~Santos, and Robson
  Nascimento.
\newblock Orbitally stable standing waves of a mixed dispersion nonlinear
  schr\" odinger equation.
\newblock {\em arXiv:1710.09775}, 2017.

\bibitem{BCGJ1}
Denis Bonheure, Jean-Baptiste Casteras, Tianxiang Gou, and Louis Jeanjean.
\newblock Normalized solutions for a mixed dispersion nonlinear schr\" odinger
  equation in the mass supercritical regime.
\newblock {\em arxiv:1802.09217}.

\bibitem{BCGJ2}
Denis Bonheure, Jean-Baptiste Casteras, Tianxiang Gou, and Louis Jeanjean.
\newblock Strong instability of ground states to a fourth order schr\" odinger
  equation.
\newblock {\em to appear in IMRN}.

\bibitem{BN}
Denis Bonheure and Robson Nascimento.
\newblock Waveguide solutions for a nonlinear schr\"odinger equation with mixed
  dispersion.
\newblock In {\em Contributions to nonlinear elliptic equations and systems},
  volume~86 of {\em Progr. Nonlinear Differential Equations Appl.}, pages
  31--53. Birkh\"auser/Springer, Cham, 2015.

\bibitem{BL}
Thomas Boulenger and Enno Lenzmann.
\newblock Blowup for biharmonic {NLS}.
\newblock {\em Ann. Sci. \'Ec. Norm. Sup\'er. (4)}, 50(3):503--544, 2017.

\bibitem{bretherton}
F.~P. Bretherton.
\newblock Resonant interactions between waves. {T}he case of discrete
  oscillations.
\newblock {\em J. Fluid Mech.}, 20:457--479, 1964.

\bibitem{Buffoni1995109}
B.~Buffoni.
\newblock Infinitely many large amplitude homoclinic orbits for a class of
  autonomous hamiltonian systems.
\newblock {\em Journal of Differential Equations}, 121(1):109 -- 120, 1995.

\bibitem{Caz}
Thierry Cazenave.
\newblock {\em Semilinear {S}chr\"odinger equations}, volume~10 of {\em Courant
  Lecture Notes in Mathematics}.
\newblock New York University, Courant Institute of Mathematical Sciences, New
  York; American Mathematical Society, Providence, RI, 2003.

\bibitem{CoDa}
Martin Costabel and Monique Dauge.
\newblock On representation formulas and radiation conditions.
\newblock {\em Math. Methods Appl. Sci.}, 20(2):133--150, 1997.

\bibitem{Evrad}
Gilles Ev\'equoz.
\newblock private communication.

\bibitem{Ev}
Gilles Ev\'equoz.
\newblock Existence and asymptotic behavior of standing waves of the nonlinear
  {H}elmholtz equation in the plane.
\newblock {\em Analysis (Berlin)}, 37(2):55--68, 2017.

\bibitem{MR3149060}
Gilles Ev\'equoz and Tobias Weth.
\newblock Real solutions to the nonlinear {H}elmholtz equation with local
  nonlinearity.
\newblock {\em Arch. Ration. Mech. Anal.}, 211(2):359--388, 2014.

\bibitem{EW}
Gilles Ev\'equoz and Tobias Weth.
\newblock Dual variational methods and nonvanishing for the nonlinear
  {H}elmholtz equation.
\newblock {\em Adv. Math.}, 280:690--728, 2015.

\bibitem{MR3625081}
Gilles Ev\'equoz and Tobias Weth.
\newblock Branch continuation inside the essential spectrum for the nonlinear
  {S}chr\"odinger equation.
\newblock {\em J. Fixed Point Theory Appl.}, 19(1):475--502, 2017.

\bibitem{EvYe}
Gilles Ev\'equoz and Tolga Yesil.
\newblock Dual ground state solutions for the critical nonlinear helmholtz
  equation.
\newblock {\em arXiv:1707.00959}.

\bibitem{MR1898529}
G.~Fibich, B.~Ilan, and G.~Papanicolaou.
\newblock Self-focusing with fourth-order dispersion.
\newblock {\em SIAM J. Appl. Math.}, 62(4):1437--1462 (electronic), 2002.

\bibitem{GiTr}
David Gilbarg and Neil~S. Trudinger.
\newblock {\em Elliptic partial differential equations of second order}.
\newblock Classics in Mathematics. Springer-Verlag, Berlin, 2001.
\newblock Reprint of the 1998 edition.

\bibitem{Grafakos}
Loukas Grafakos.
\newblock {\em Classical {F}ourier analysis}, volume 249 of {\em Graduate Texts
  in Mathematics}.
\newblock Springer, New York, third edition, 2014.

\bibitem{Gu}
Susana Guti\'errez.
\newblock Non trivial {$L^q$} solutions to the {G}inzburg-{L}andau equation.
\newblock {\em Math. Ann.}, 328(1-2):1--25, 2004.

\bibitem{MR1779828}
V.I. Karpman and A.G. Shagalov.
\newblock Stability of solitons described by nonlinear {S}chr\"odinger-type
  equations with higher-order dispersion.
\newblock {\em Phys. D}, 144(1-2):194--210, 2000.

\bibitem{lazer}
A.~C. Lazer and P.~J. McKenna.
\newblock Large-amplitude periodic oscillations in suspension bridges: some new
  connections with nonlinear analysis.
\newblock {\em SIAM Rev.}, 32(4):537--578, 1990.

\bibitem{Leis}
Rolf Leis.
\newblock Initial-boundary value problems in mathematical physics.
\newblock In {\em Modern mathematical methods in diffraction theory and its
  applications in engineering ({F}reudenstadt, 1996)}, volume~42 of {\em
  Methoden Verfahren Math. Phys.}, pages 125--144. Peter Lang, Frankfurt am
  Main, 1997.

\bibitem{levandosky}
Steven~P. Levandosky and Walter~A. Strauss.
\newblock Time decay for the nonlinear beam equation.
\newblock {\em Methods Appl. Anal.}, 7(3):479--487, 2000.
\newblock Cathleen Morawetz: a great mathematician.

\bibitem{LorBer_analytical}
Luca Lorenzi and Marcello Bertoldi.
\newblock {\em Analytical methods for {M}arkov semigroups}, volume 283 of {\em
  Pure and Applied Mathematics (Boca Raton)}.
\newblock Chapman \& Hall/CRC, Boca Raton, FL, 2007.

\bibitem{Mandel2}
Rainer Mandel.
\newblock The limiting absorption principle for periodic differential operators
  and applications to nonlinear helmholtz equations.
\newblock {\em Preprint arXiv:1710.06332}, 2017.

\bibitem{Man_reg}
Rainer Mandel.
\newblock A note on the local regularity of distributional solutions and
  subsolutions of semilinear elliptic systems.
\newblock {\em Manuscripta Math.}, 154(3-4):345--357, 2017.

\bibitem{MMP}
Rainer Mandel, Eugenio Montefusco, and Benedetta Pellacci.
\newblock Oscillating solutions for nonlinear {H}elmholtz equations.
\newblock {\em Z. Angew. Math. Phys.}, 68(6):68:121, 2017.

\bibitem{Mandel1}
Rainer Mandel and Dominic Scheider.
\newblock Dual variational methods for a nonlinear helmholtz system.
\newblock {\em Preprint arXiv:1710.04526}, 2017.

\bibitem{mckenna}
P.~J. McKenna and W.~Walter.
\newblock Travelling waves in a suspension bridge.
\newblock {\em SIAM J. Appl. Math.}, 50(3):703--715, 1990.

\bibitem{miao}
Changxing Miao, Guixiang Xu, and Lifeng Zhao.
\newblock Global well-posedness and scattering for the defocusing
  energy-critical nonlinear {S}chr\"odinger equations of fourth order in
  dimensions {$d\geqslant9$}.
\newblock {\em J. Differential Equations}, 251(12):3381--3402, 2011.

\bibitem{natalipastor}
F.~Natali and A.~Pastor.
\newblock The fourth-order dispersive nonlinear {S}chr\"odinger equation:
  orbital stability of a standing wave.
\newblock {\em SIAM J. Appl. Dyn. Syst.}, 14(3):1326--1347, 2015.

\bibitem{MR2353631}
B.~Pausader.
\newblock Global well-posedness for energy critical fourth-order
  {S}chr\"odinger equations in the radial case.
\newblock {\em Dyn. Partial Differ. Equ.}, 4(3):197--225, 2007.

\bibitem{MR2502523}
B.~Pausader.
\newblock The cubic fourth-order {S}chr\"odinger equation.
\newblock {\em J. Funct. Anal.}, 256(8):2473--2517, 2009.

\bibitem{MR2505703}
B.~Pausader.
\newblock The focusing energy-critical fourth-order {S}chr\"odinger equation
  with radial data.
\newblock {\em Discrete Contin. Dyn. Syst.}, 24(4):1275--1292, 2009.

\bibitem{MR2746203}
B.~Pausader and S.~Shao.
\newblock The mass-critical fourth-order {S}chr\"odinger equation in high
  dimensions.
\newblock {\em J. Hyperbolic Differ. Equ.}, 7(4):651--705, 2010.

\bibitem{MR3078112}
B.~Pausader and S.~Xia.
\newblock Scattering theory for the fourth-order {S}chr\"odinger equation in
  low dimensions.
\newblock {\em Nonlinearity}, 26(8):2175--2191, 2013.

\bibitem{ruz}
Michael Ruzhansky, Baoxiang Wang, and Hua Zhang.
\newblock Global well-posedness and scattering for the fourth order nonlinear
  {S}chr\"odinger equations with small data in modulation and {S}obolev spaces.
\newblock {\em J. Math. Pures Appl. (9)}, 105(1):31--65, 2016.

\bibitem{segata1}
Jun-ichi Segata.
\newblock Well-posedness and existence of standing waves for the fourth order
  nonlinear {S}chr\"odinger type equation.
\newblock {\em Discrete Contin. Dyn. Syst.}, 27(3):1093--1105, 2010.

\bibitem{segata2}
Jun-ichi Segata.
\newblock Refined energy inequality with application to well-posedness for the
  fourth order nonlinear {S}chr\"odinger type equation on torus.
\newblock {\em J. Differential Equations}, 252(11):5994--6011, 2012.

\bibitem{Ste_harmonic}
Elias~M. Stein.
\newblock {\em Harmonic analysis: real-variable methods, orthogonality, and
  oscillatory integrals}, volume~43 of {\em Princeton Mathematical Series}.
\newblock Princeton University Press, Princeton, NJ, 1993.
\newblock With the assistance of Timothy S. Murphy, Monographs in Harmonic
  Analysis, III.

\bibitem{SteinWeiss}
Elias~M. Stein and Guido Weiss.
\newblock {\em Introduction to {F}ourier analysis on {E}uclidean spaces}.
\newblock Princeton University Press, Princeton, N.J., 1971.
\newblock Princeton Mathematical Series, No. 32.

\bibitem{Sulem}
Catherine Sulem and Pierre-Louis Sulem.
\newblock {\em The nonlinear {S}chr\"odinger equation}, volume 139 of {\em
  Applied Mathematical Sciences}.
\newblock Springer-Verlag, New York, 1999.
\newblock Self-focusing and wave collapse.

\bibitem{Tom_A_restriction}
Peter~A. Tomas.
\newblock A restriction theorem for the {F}ourier transform.
\newblock {\em Bull. Amer. Math. Soc.}, 81:477--478, 1975.

\bibitem{BaoZh_homogeneous}
Wei Zhang and Jiguang Bao.
\newblock Regularity of very weak solutions for elliptic equation of divergence
  form.
\newblock {\em J. Funct. Anal.}, 262(4):1867--1878, 2012.

\bibitem{BaoZh_nonhomogeneous}
Wei Zhang and Jiguang Bao.
\newblock Regularity of very weak solutions for nonhomogeneous elliptic
  equation.
\newblock {\em Commun. Contemp. Math.}, 15(4):1350012, 19, 2013.

\end{thebibliography}

\end{document}